\let\oldfn\footnote \renewcommand\footnote[1]{\oldfn{\,#1}}
\newtheorem{thm}{Theorem}
\newtheorem*{thm*}{Theorem}
\newtheorem{stat}[thm]{Statement}
\newtheorem{lem}[thm]{Lemma}
\newtheorem{prop}[thm]{Proposition}
\theoremstyle{definition}
\newtheorem{defn}[thm]{Definition}
\newtheorem{rem}[thm]{Remark}
\newtheorem{que}{Question}
\newtheorem{example}{Example} 
\makeatletter\@ifundefined{proof}{\newenvironment{proof}[1][Proof.]{\noindent\textit{#1} }{\hfill$\square$\par}}{}\makeatother
\newcounter{commentcounter} 
\def\M{{\mathcal M}}
\def\N{{\mathbb N}}
\def\Z{{\mathbb Z}}
\def\P{{\mathcal P}}
\def\1{{\mathbbm1}}
\def\pp{{\bar p}}
\def\qq{{\bar q}}
\def\1{{\mathbbm 1}}
\def \tl {topological}
\def \im {invariant measure}
\def \inv {invariant}
\def \ds {dynamical system}
\def \sq {sequence}
\def \eps {\varepsilon}
\providecommand\doublecyl[2]{{\big[\begin{smallmatrix}#1 \\ #2\end{smallmatrix}\big]}}
\title{Destruction of {CPE}-normality along deterministic sequences}
\author{Adam Abrams}
\address{Faculty of Pure and Applied Mathematics, Wroc\l{}aw University of Science and Technology, \linebreak{}Wroc\l{}aw, Poland}
\email{the.adam.abrams@gmail.com}
\author{Tomasz Downarowicz}
\address{Faculty of Pure and Applied Mathematics, Wroc\l{}aw University of Science and Technology, \linebreak{}Wroc\l{}aw, Poland}
\email{dowar@pwr.edu.pl}
\keywords{completely positive entropy, deterministic sequences, continued fraction normality, $\mu$-normality preservation, $\mu$-normality destruction}
\subjclass[2020]{11A55, 11K16, 37B10}
\begin{document}
\maketitle

\begin{abstract}Let $\mu$ be a shift-invariant measure on $\Lambda^{\mathbb N}$, where $\Lambda$ is a finite or countable alphabet. We say that an infinite subset $S=\{s_1,s_2,\dots\}\subset\N$ (where
$s_1<s_2<\dots$) \emph{preserves (destroys) $\mu$-normality} if, for any $x=(x_1,x_2,\dots)\in\Lambda^{\mathbb N}$ generic for $\mu$, the sequence $x|_S=(x_{s_1},x_{s_2},\dots)$ is (is not) generic for $\mu$. It is known from Kamae and Weiss that if $\mu$ is i.i.d.~then any deterministic set of positive lower density preserves $\mu$-normality. We show that deterministic sets, except ones with a very primitive structure that we call ``superficial'', destroy $\mu$-normality for any non-i.i.d.~measure $\mu$ with completely positive entropy (CPE). This generalizes Heersink and Vandehey's result for arithmetic progressions and the Gauss measure (associated to the continued fraction transformation). We give several examples showing that outside the class of measures with CPE, $\mu$-normality preservation can coexist with nearly any combination of three parameters: determinism of $S$ (or its lack), entropy of $\mu$ (zero or positive), and disjointness (or its lack) between $\mu$ and the measures quasi-generated by~$\mathbbm1_S\in\{0,1\}^{\mathbb N}$.
\end{abstract}

\section{Introduction}

A \emph{normal number} to base $10$ is a number $x\in[0,1]$ for which every block of digits $B=(b_1,b_2,\dots,b_k)$, where $k\in\N$ and each $b_i\in\{0,1,\dots,9\}$, appears in the decimal expansion of $x$ with the limit frequency $10^{-k}$.
In the language of ergodic theory, $x$ is normal when it is generic\footnote{Definitions of all necessary terminology are provided in the next section.} for the Lebesgue measure under the map $x\mapsto 10x \bmod 1$. In terms of symbolic dynamics, a number $x$ is normal if and only if its decimal expansion, viewed as an element of the unilateral (i.e., one-sided) shift on $10$ symbols, is generic for the unilateral $(\frac1{10},\frac1{10},\dots,\frac1{10}$)-Bernoulli measure. In 1909, E.~Borel showed\footnote{Borel's proof raised many controversies among mathematicians. From today's perspective it is clear that formally it does have a gap. The Borel--Cantelli Lemma proved several years later (see~\cite{Ca}) fixes the gap. Today normality of almost every number follows immediately from the Birkhoff Ergodic Theorem.} that almost every number $x\in[0,1]$ is normal (see~\cite{Bo}).
In 1949, D.~D.~Wall~\cite{Wa} proved that if $0.b_1b_2b_3\dots$ is normal then $0.b_\ell b_{\ell+m}b_{\ell+2m}b_{\ell+3m}\dots$ is normal as well, regardless of the parameters $\ell,m\in\N$. This result can be verbalized as follows: 

\begin{stat} \emph{Arithmetic progressions \emph{preserve} (classical) normality.} \end{stat}

\providecommand\maybeemph[1]{#1}
This phenomenon was generalized in the early 1970s by Kamae and Weiss~\cite{We,Ka}, who showed that a set $S\subset\N$ of positive lower density preserves normality if and only if it is \maybeemph{deterministic} (which means that all measures \maybeemph{derived} from $S$ have entropy zero). We remark that the Kamae--Weiss Theorem remains valid, by the same proof, if the uniform Bernoulli measure is replaced by any discrete Bernoulli measure (i.e., an i.i.d.~process with finitely or countably many states). 
\smallskip

A number $x\in[0,1]$ is called \maybeemph{CF-normal} (CF stands for ``continued fraction'') if it is generic under the Gauss map for the Gauss measure on the interval. This is equivalent to the condition that every block $B=(b_1,b_2,\dots,b_k)$, where $k\in\N$ and each $b_i\in\N$, appears in the continued fraction expansion $[0;a_1,a_2,\dots]$ of $x$ with limit frequency equal to the Gauss measure of the cylinder $[B]$ (i.e., the subset of $[0,1]$ determined by $B$).\footnote{This sentence uses square brackets in three ways: a continued fraction, a cylinder, and the unit interval. Throughout the paper, the meaning of brackets should be clear from context (continued fraction expansions are also distinguished by the semicolon after $0$).}

A result obtained in 2016 by Heersink and Vandehey~\cite{HV} stands in drastic contrast to Wall's Theorem. It asserts that for any CF-normal number $x=[0;a_1,a_2,\dots]$ and any natural parameters $\ell,m$ with $m \ge 2$, the number $z=[0;a_\ell,a_{\ell+m},a_{\ell+2m},\dots]$ is never CF-normal. This is much stronger than the claim that arithmetic progressions do not preserve CF-normality (for that it would be sufficient to have one counterexample $(x,\ell,m)$ for which the resulting number $z$ is not CF-normal). To emphasize this distinction, we make the following verbalization: 

\begin{stat} \label{HV} \emph{Non-trivial arithmetic progressions \emph{destroy} CF-normality.} \end{stat}

The Heersink--Vandehey result might seem counterintuitive. In~\cite{We}, the key ingredient in Weiss' proof (that if a set $S\subset\N$ is deterministic then $S$ preserves classical normality) is the fact that unilateral Bernoulli shifts are disjoint (in the sense of Furstenberg) from all zero-entropy systems. Indeed, they have so-called \maybeemph{completely positive entropy (CPE)}, and H.~Furstenberg, in his seminal paper on disjointness~\cite{Fu}, proved that systems with CPE are disjoint from all zero-entropy systems. The Gauss system has CPE (this follows from the fact that it is a unilateral measure-theoretic factor of a K-automorphism, see~\cite[Corollary~2]{Na}), so it is disjoint from all zero-entropy systems as well. In other words, the Gauss system enjoys the key property needed for the validity of Weiss' proof. Based on disjointness between systems with CPE and zero-entropy systems, Kamae even formulated a remark (\cite[page 147]{Ka}) that says, roughly speaking, that $\mu$-normality (understood as being generic for a measure $\mu$ with CPE) is \emph{in some sense} preserved by essential deterministic sets of positive lower density. Did Heersink and Vandehey prove Kamae wrong?
The answer is no. Kamae's remark, precisely translated to a more contemporary language, states that if $\mu$ has CPE then deterministic sets of positive lower density preserve a slightly weaker property than $\mu$-normality that, following Borel's terminology, should be called \maybeemph{simple $\mu$-normality}. It means that frequencies of just single symbols are preserved. And this statement is true. In fact, it is an easy observation (see \Cref{main3}) that for any shift \im\ $\mu$ (including those of zero entropy), disjointness of $\mu$ from all measures derived from a set $S$ with positive lower density implies that $S$ preserves simple $\mu$-normality. But for ``full'' $\mu$-normality preservation, disjointness is not enough, and the Heersink--Vandehey result is an example of this fact.

This raises some natural questions:
\begin{enumerate}
	\item Exactly why does the argument of classical normality preservation fail for CF-normality? 
	\item How badly does it fail? That is, does it fail for all deterministic sets of positive lower density or just for arithmetic progressions? 
	\item How important is the fact that the Gauss system has CPE without being a Bernoulli shift?
	\item Does the analogue of the Kamae--Weiss Theorem, or at least of Wall's Theorem, still hold for systems that are isomorphic to unilateral Bernoulli shifts?
	\item Does the phenomenon of normality preservation exist for systems without CPE or with entropy zero?
	\item Is disjointness of $\mu$ from all measures derived from $S$ necessary for $S$ to preserve $\mu$-normality? 
\end{enumerate}

In this paper we answer all these questions, in particular revealing a surprising peculiarity of Bernoulli shifts (in other words, of i.i.d.~processes). We will show that \emph{the phenomenon of normality preservation exists for no systems with CPE except for Bernoulli shifts} (see Theorems~\ref{main1} and~\ref{main2}). The property of Bernoulli shifts which other systems with CPE are lacking, and which (in addition to disjointness from zero-entropy systems) is crucial for $\mu$-normality preservation of deterministic sets, is ``spreadability'' (see \Cref{sec CPE not B}).


The question remains whether $\mu$-normality preservation is possible in the case where $\mu$ has positive entropy and $S$ is not deterministic. In this case, among measures derived from $S$ there exists at least one of positive entropy and this one is not disjoint from $\mu$.\footnote{Two measures of positive entropy are never disjoint. In fact, their natural extensions have a common Bernoulli factor (see~\cite{Si}).} But whether this implies a lack of $\mu$-normality preservation appears to be a very difficult problem. All we know is that, in view of \Cref{main2}, in any potential example $\mu$ must not have CPE. But even if it has, a similar question about simple $\mu$-normality preservation is open. We append a short section containing this and other open problems.

\section{Preliminaries} \label{sec:preliminaries}
\subsection{Generic and quasi-generic points} We consider a \tl\ \ds\ $(X,T)$, where $X$ is a compact metric space and $T:X\to X$ is a continuous transformation. By the \mbox{weak-star} compactness of the set of Borel probability measures on $X$, for every $x\in X$ the sequence of atomic measures 
\[
A_n(x) = \frac1n\sum_{i=0}^{n-1}\delta_{T^i(x)}
\]
has at least one accumulation point. If $\mu$ is such an accumulation point, it is necessarily $T$-invariant. We then say that $x$ \emph{quasi-generates} $\mu$ or that $x$ is \emph{quasi-generic} for $\mu$. The set of all measures quasi-generated by $x$ will be denoted by $\M_x$. If the entire \sq\ $A_n(x)$ converges to some measure $\mu$ (equivalently, if $\M_x=\{\mu\}$) then we say that $x$ \emph{generates} $\mu$ or that $x$ is \emph{generic} for $\mu$.  If $\mu$ is ergodic, then, by virtue of the Birkhoff Ergodic Theorem, $\mu$-almost every point is generic for $\mu$.

We will mostly focus on \emph{unilateral shift systems} $(\Lambda^\N,\sigma)$ on a finite\footnote{We always assume that $\#\Lambda\ge2$; otherwise $\Lambda^\N$ is a singleton and all our considerations become trivial.} or countable alphabet $\Lambda$, that is, for $x=(x_i)_{i\in\N}\in\Lambda^\N$, 
\[ 
    (\sigma\,x)_i = x_{i+1}, \quad i\in\N.
\]
For a finite block $B=(b_1,b_2,\dots,b_k)\in\Lambda^k$ ($k\in\N$) and $x\in\Lambda^\N$, the \emph{frequency} of $B$ in $x$ is defined as
\[
    \mathsf{Fr}_x(B) = \lim_{n\to\infty}\frac1n\#\big\{i\in\{1,2,\dots,n\}: (x_i,x_{i+1},\dots,x_{i+k-1}) =B\big\},
\]
provided the limit exists.
It is elementary to see that an element $x$ of a shift system is generic for a shift-\im~$\mu$ if and only if $\mathsf{Fr}_x(B)$ equals $\mu([B])$ for all finite blocks $B$, where \[ [B]=\{y\in X: (y_1,\dots,y_k) =B\} \] is called the \emph{cylinder} associated with $B$. It is well known that for any \im\ $\mu$ on the symbolic space $\Lambda^\N$ there exists an element $x\in\Lambda^\N$ generic for $\mu$.

We remark that all of these definitions extend naturally to bilateral shifts on $\Lambda^{\mathbb Z}$, with frequency defined either in a two-sided or one-sided manner (leading to two different meanings of~generic points). The notion of generic points for measures also extends to $\Lambda^G$ with $G$ a count\-able amenable group or cancellative semigroup, although the definition of frequency is more involved in this setting (see~\cite[Section~5.3]{BDV}).

\subsection{Bernoulli shifts and the Gauss system}
Let $P=\{p_a:a\in\Lambda\}$ be a probability vector assigning probabilities to the symbols of the alphabet $\Lambda$. The product measure $\mu_P$ is determined by the following equalities: for each $k \in \N$ and each block $B=(b_1,b_2,\dots,b_k)\in\Lambda^k$ we have
\[
\mu_P([B])=p_{b_1}p_{b_2}\cdots p_{b_k}. 
\]
Note that if we treat the symbols appearing at the consecutive coordinates as $\Lambda$-valued random variables then the product measure $\mu_P$ corresponds to the case where the above random variables are independent and identically distributed (i.i.d.). In ergodic theory, the measure $\mu_P$ is called \emph{the Bernoulli measure} (associated to $P$), and the system $(\Lambda^\N,\mu_P,\sigma)$ is called a \emph{Bernoulli shift}. 
If $\Lambda$ is finite and $P$ is uniform (that is, $p_a=\frac1{\#\Lambda}$ for each $a\in\Lambda$) then $\mu_P$ is called the \emph{uniform Bernoulli measure} and each point generic for $\mu_P$ is traditionally called \emph{normal} in base $\#\Lambda$. By analogy to this traditional notion of normality, we will say that $x\in\Lambda^\N$ is \emph{$\mu$-normal} if it is generic for some shift-\im~$\mu$ (not necessarily Bernoulli).
\medskip

In particular, we will consider the Gauss system $([0,1],\lambda,G)$, where 
\[ 
G(x) = \left\{\frac1x\right\} \text{ ($\{\cdot\}$ denotes the fractional part function)}, \ \ {\rm d}\lambda=\frac{{\rm d}x}{(x+1)\log2}. 
\] 
If numbers $x\in[0,1]$ are represented by their continued fraction expansions
\[
\quad x=[0;a_1,a_2,\dots]=\smash{ \frac1{a_1+\frac1{a_2+\frac1{a_3+\cdots}}} } \phantom{\frac9{9_y}}
\]
then the Gauss transformation is simply the shift:
\[
G([0;a_1,a_2,a_3\dots])=[0;a_2,a_3,\dots].
\]
Then the Gauss measure $\lambda$ on $[0,1]$ corresponds to some shift-\im\ on $\N^\N$. By a slight abuse of terminology, we will call this measure on $\N^\N$ the \emph{Gauss measure} as well. Points in~$[0,1]$ (or elements of $\N^\N$ via their continued fraction expansions) that are generic for the Gauss measure will be referred to as \emph{CF-normal}.

\subsection{Entropy, CPE, and determinism} \label{ss CPE} Let $\mu$ be a probability measure on some standard measurable space $X$, and let~$\P$ be a finite measurable partition of~$X$. The Shannon entropy of~$\P$ is defined as
\[
H_\mu(\P)=-\sum_{A\in\P}\mu(A)\log(\mu(A)).
\]
For $(X,\mu,T)$ a measure-preserving system, the \emph{entropy of the process generated by $\P$} is
\[
h_\mu(\P) = \lim_{n\to\infty}\frac1nH_\mu \!\left( \bigvee_{i=0}^{n-1}T^{-i}\P \right),
\]
and the \emph{Kolmogorov--Sinai entropy} of the system $(X,\mu,T)$ is 
\[
h_\mu(T) = \sup_\P h_\mu(\P),
\]
where the supremum is taken over all finite measurable partitions of $X$.
Often the transformation~$T$ will be fixed (for example, as the shift map $\sigma$), in which case we may write~$h(\mu)$ in place of~$h_\mu(T)$ and call it briefly \emph{the entropy of~$\mu$}. This convention allows us to speak about ``{measures of positive entropy}'' and ``{measures of entropy zero}.''

A system (measure) is said to have \emph{completely positive entropy (CPE)} if for every nontrivial finite partition $\P$ of $X$ the entropy $h_\mu(\P)$ is positive. It is well known that Bernoulli shifts have CPE. Another important example of a system with CPE is the Gauss system.

We remark that the class of invertible systems with CPE coincides with the class of \emph{Kolmogorov automorphisms}, commonly abbreviated as K-automorphisms (we skip the original definition; see, for example,~\cite{Parry}, which also contains the characterization via CPE originally due to Kolmogorov and Sinai). Some authors use the name ``K-systems'' for (not necessarily invertible) systems with~CPE.  
\medskip

At the extreme opposite to CPE is the notion of determinism. According to our needs, we will discuss this property exclusively in the context of infinite subsets of $\N$. If we identify $S \subset \N$ with its characteristic function $y=\1_S$ (from now on the letter $y$ is reserved to denote $\1_S$), it becomes an element of the binary symbolic space $\{0,1\}^\N$. A shift-invariant measure $\nu$ on this space is said to be \emph{derived from~$S$} if $\nu\in\M_y$, and the set $S$ is called \emph{deterministic} if all measures derived from~$S$ have zero entropy.

\subsection{Mixing of all orders}
A measure-preserving system $(X,\mu,T)$ is \emph{mixing} if for any pair of measurable sets $A,B$ one has
\[
\lim_{n\to\infty}\mu(A\cap T^{-n}B) = \mu(A)\mu(B).
\]
The system is \emph{mixing of all orders} if for any finite collection of sets $A_0,A_1,A_2,\dots,A_k$ ($k\in\N$) one has
\[
\lim_{n_1,\dots,n_k\to\infty}\mu(A_0\cap T^{-n_1}A_1\cap T^{-n_1-n_2}A_2\cap\cdots\cap T^{-n_1-n_2-\cdots-n_k}A_k) = \mu(A_1)\mu(A_2)\cdots\mu(A_k).
\]
It is a long standing open question, posed by Rokhlin in~\cite{R49}, whether mixing implies mixing of all orders. We know, however, that completely positive entropy does imply mixing of all orders~\cite{R61}.

\subsection{Preservation and destruction of normality}
Let $\Lambda^\N$ be a unilateral symbolic space system on a finite or countable alphabet $\Lambda$ with a fixed shift-\inv\ probability measure $\mu$.
An infinite set $S = \{s_1,s_2,\dots\}$ of natural numbers (equivalently, an increasing sequence in $\N$) is said to \emph{preserve $\mu$-normality} if for any $\mu$-normal element $x=(x_1,x_2,\dots)\in\Lambda^\N$ the element
\[
    x|_S = (x_{s_1},x_{s_2},\dots)\in\Lambda^\N
\] 
is also $\mu$-normal. We say that $S$ \emph{destroys $\mu$-normality} if for any $\mu$-normal~$x$ the element $x|_S$ is not $\mu$-normal. We emphasise that in both cases the requirement involves all (not only $\mu$-almost all) $\mu$-normal elements $x$. The weaker notion of ``almost sure $\mu$-normality preservation'' is much more easily satisfied (for example, any infinite set $S$ preserves classical normality almost surely) and will not be addressed in this paper.

\subsection{Simple normality}
Borel in~\cite{Bo} calls a number $x\in[0,1]$ \emph{simply normal} in base 10 if every digit $0, 1, 2, \dots, 9$ appears in the decimal expansion of $x$  with the frequency $\frac1{10}$. Following this terminology, we will call an element $x\in\Lambda^\N$ \emph{simply $\mu$-normal} if for every symbol $a\in\Lambda$ one has
\[
\mathsf{Fr}_x(a) = \mu([a]).
\]

We say that an infinite set $S\subset\N$ \emph{preserves simple $\mu$-normality} if for any element $x\in \Lambda^\N$ that is $\mu$-normal (attention! not just simply $\mu$-normal) the element $x|_S$ is simply $\mu$-normal. In this situation, both Kamae and Weiss use a different terminology. If $x\in\Lambda^\N$ and $S\subset\N$ are such that
\[
\mathsf{Fr}_x(a) = \mathsf{Fr}_{x|_S}(a)
\]
for all $a\in\Lambda$, they say that $x$ is an ``$S$-collective.'' In this language, $S$ preserves simple $\mu$-normality if every $\mu$-normal sequence is an $S$-collective.

\subsection{Lower and upper density}
The lower density and upper density of a set $S\subset \N$ are defined, respectively, as
\begin{align}\label{uld}
\begin{split}
\underline d(S)&=\liminf_{n\to\infty} \frac{\#(S\cap\{1,\dots,n\})}{n}, \\
\overline d(S)&=\limsup_{n\to\infty} \frac{\#(S\cap\{1,\dots,n\})}{n}.
\end{split}
\end{align}
If the two are equal, we speak about density of $S$. Clearly, ``lower density $1$'' is synonymous with ``density $1$'' and ``upper density $0$'' is synonymous with  ``density $0$.'' By compactness of the interval $[0,1]$, the limit in \eqref{uld} exists along an increasing \sq\ $(n_k)_{k\ge1}$. Then we speak about $(n_k)$-density of $S$.

\subsection{Joinings and disjointness} 
Let $(X,\mu,T)$ and $(Z,\nu,U)$ be two measure-preserving systems. 
A~\emph{joining} of~$\mu$ and~$\nu$, denoted by~$\mu\vee\nu$, is any measure $\xi$ on the product space $X\times Z$ with the following properties:
\begin{itemize}
\item the marginal of $\xi$ on $X$ is $\mu$ (that is, $\xi(A \times Z) = \mu(A)$ for all measurable $A \subset X$),
\item the marginal of $\xi$ on $Z$ is $\nu$, and
\item $\xi$ is \inv\ under the product map $T\times U$ defined by $(T\times U)(x,z)=(Tx,Uz)$.
\end{itemize}
There always exist at least one joining, namely the {product measure} $\mu\times\nu$. If $\mu$ and $\nu$ admit no joinings other than the product measure, then they are called \emph{disjoint}. This notion was introduced by~H.~Furstenberg in his seminal paper~\cite{Fu}, in which he proves that Bernoulli shifts are disjoint from all zero-entropy systems. The truth is that the class of systems that are disjoint from all zero-entropy systems coincides with CPE. Furstenberg refers in this aspect to Rokhlin~\cite{R60}, who in turn attributes the result (without reference) to Pinsker.
\medskip

The following observation is useful. Suppose that $x\in X$ is generic for an \im~$\mu$, and let $z\in Z$. Then any measure on $X\times Z$ quasi-generated by the pair $(x,z)$ is a joining of $\mu$ with some \im~$\nu$ on $Z$ that is quasi-generated by $z$. 

\subsection{Known results}
We summarize here the existing results about $\mu$-normality preservation and destruction:
\begin{itemize}
    \item Arithmetic progressions preserve classical normality (\cite{Wa}).
    \item Deterministic sets with positive lower density preserve classical normality (\cite{We}).
    \item Among sets with positive lower density, only deterministic ones preserve classical normality (\cite{Ka}).
    \item If $\mu$ has CPE then deterministic sets with positive lower density preserve simple $\mu$-normality (\cite{Ka}).
    \item Nontrivial arithmetic progressions destroy CF-normality (\cite{HV}).
\end{itemize}
We remark that for actions of countable amenable groups (or cancellative semigroups) $G$ and a uniform Bernoulli measure $\mu$ on $\Lambda^G$, the paper~\cite{BDV} provides a characterization of $\mu$-normality-preserving sets $S\subset G$ as precisely the deterministic sets with positive lower density. It has to be noted, however, that $\mu$-normality of an element $x\in\Lambda^G$ along a set $S\subset G$ is defined in a slightly different way and does not correspond to $\mu$-normality of $x|_S$ if $G=\N$ or $G=\Z$. With the definition of $\mu$-normality of $x$ along $S$ taken from~\cite{BDV}, the main result of this paper (\Cref{main1}) becomes false. So far, a notion of $\mu$-normality of $x$ along a subset $S\subset G$ that is applicable to actions of more general countable (semi)groups $G$ and that would lead to results fully analogous as for~$\N$ or~$\Z$ has not been found.

\section{Our results}

\subsection{Superficial sets}
As we are interested in $\mu$-normality destruction, we first need to deal with some classes of sets
which never destroy $\mu$-normality for trivial reasons associated with their structure. We call these sets ``superficial''. The name is meant to reflect the uncomplicated architecture of these sets. The main theorem of the paper will be concerned with non-superficial sets. The formal definition follows:

\begin{defn} \label{def:superficial}
An infinite set $S\subset\N$ is \emph{superficial} if, for $y=\mathbbm 1_S$, $\M_y$ is contained in the convex hull $\mathsf{conv}\{\delta_{\bar0},\delta_{\bar1}\}$, where $\delta_{\bar0}$ and $\delta_{\bar1}$ are the measures concentrated at the constant sequences $(0,0,...)$ and $(1,1,...)$, respectively.
\end{defn}

\begin{rem}Superficiality does not imply that the orbit closure of $y$ supports only two ergodic measures, $\delta_{\bar0}$ and $\delta_{\bar1}$. Along a set of density zero $y$ may contain arbitrarily long blocks of arbitrary structure, in particular, its orbit closure may even be the full shift on two symbols.
\end{rem}

\pagebreak[4]
\begin{prop}\label{superf}
An infinite set $S\subset\N$ is superficial if and only if $\N$ splits into three disjoint parts: $\mathbb A,\mathbb B,\mathbb C$, where
\begin{itemize}
    \item $\mathbb A$ (possibly empty) has density zero,
    \item $\mathbb B$ is either empty or is a union of infinitely many intervals whose lengths tend to infinity, and $\mathbb B\subset S$ (i.e., $y|_{\mathbb B}$ consists only of $1$'s),
    \item $\mathbb C$ is either empty or is a union of infinitely many intervals whose lengths tend to infinity, and $\mathbb C\cap S=\emptyset$ (i.e., $y|_{\mathbb C}$ consists only of $0$'s).
\end{itemize}
\end{prop}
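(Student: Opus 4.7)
The plan is first to translate the measure-theoretic definition of superficiality into a purely combinatorial condition on the positions of transitions in $y$, and then to build the decomposition by classifying the maximal constant blocks of $y$ as ``long'' or ``short''. Write $T=\{i\in\N: y_i\neq y_{i+1}\}$. A shift-invariant $\nu$ on $\{0,1\}^\N$ lies in $\mathsf{conv}\{\delta_{\bar0},\delta_{\bar1}\}$ iff $\nu([01])=\nu([10])=0$, since shift-invariance then forces $\nu$-a.e.\ sequence to be constant. As every $\nu\in\M_y$ realizes some subsequential limit of the frequencies of the patterns $01$ and $10$ in prefixes of $y$, superficiality of $y$ is equivalent to $\overline d(T)=0$.

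For the easy direction ($\Leftarrow$), assume $\N=\mathbb A\sqcup\mathbb B\sqcup\mathbb C$ as stated. A transition of $y$ can occur only within $\mathbb A$ or at an endpoint of an interval comprising $\mathbb B$ or $\mathbb C$. Let $K_n$ be the number of such intervals meeting $[1,n]$; since at least $K_n-1$ of them are contained in $[1,n]$, their total length is $\le n$, and their lengths tend to infinity, the average length diverges, so $K_n=o(n)$. Combined with $|\mathbb A\cap[1,n]|/n\to 0$, this gives $|T\cap[1,n]|\le|\mathbb A\cap[1,n]|+2K_n=o(n)$, i.e.\ $\overline d(T)=0$.

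For the nontrivial direction ($\Rightarrow$), list the maximal constant blocks of $y$ as $I_1,I_2,\dots$ with lengths $\ell_j$, and put $s_k=\ell_1+\cdots+\ell_k$; when there are infinitely many blocks, $\overline d(T)=0$ reads $s_k/k\to\infty$. The key step is to pick a non-decreasing sequence $h_k\to\infty$ with $kh_k=o(s_k)$---for instance, $h_k=\min_{j\ge k}\lfloor\sqrt{s_j/j}\rfloor$---and declare $I_j$ \emph{long} when $\ell_j\ge h_j$. Taking $\mathbb A$ as the union of short blocks, $\mathbb B$ as the union of long $1$-blocks, and $\mathbb C$ as the union of long $0$-blocks, the short blocks contribute total length at most $\sum_{j\le k}h_j\le kh_k=o(s_k)$, so $\overline d(\mathbb A)=0$, and each long block $I_j$ satisfies $\ell_j\ge h_j\to\infty$, making the lengths of intervals in $\mathbb B$ and in $\mathbb C$ diverge. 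Two edge cases remain to be tidied up. If one of $\mathbb B,\mathbb C$ turns out to be a nonempty but finite union of intervals, I absorb those finitely many intervals into $\mathbb A$ (a finite addition that preserves density zero), forcing that set to be empty. If instead $y$ is eventually constant, so only finitely many maximal blocks exist, I artificially subdivide the terminal infinite run into successive intervals of lengths $1,2,3,\dots$ separated by single-point gaps sent to $\mathbb A$. The main technical hurdle is the joint requirement $h_k\to\infty$ and $kh_k=o(s_k)$, which is precisely what the divergence $s_k/k\to\infty$ allows.
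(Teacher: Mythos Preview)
Your argument is correct and follows a genuinely different route from the paper's. You first reduce superficiality to the single combinatorial condition $\overline d(T)=0$ on the transition set, then build $\mathbb A$ as the union of those maximal constant runs $I_j$ that are ``short'' relative to an adaptive threshold $h_j$ satisfying $h_k\uparrow\infty$ and $kh_k=o(s_k)$. The paper instead works with \emph{wrapped constant blocks} $10^m1$ and $01^m0$: superficiality gives density zero for each fixed $m$, and an inductive choice of cut-points $n_m$ lets one sweep the domains of all wrapped blocks of length $\le m$ occurring in $[n_m,n_{m+1})$ into $\mathbb A$, with the complement then splitting automatically into $\mathbb B$ and $\mathbb C$. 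Your approach has the advantage of isolating the clean equivalence $\overline d(T)=0\Leftrightarrow s_k/k\to\infty$ and treating the runs themselves rather than their wrappers, which makes the density estimate for $\mathbb A$ essentially a one-line Ces\`aro bound; the paper's construction stays closer to the cylinder-measure formulation of superficiality but is slightly more indirect. Two small points worth tightening in a final write-up: the bound $|\mathbb A\cap[1,s_k]|\le kh_k=o(s_k)$ gives density zero only along the subsequence $(s_k)$, so you should remark (as is routine) that for general $n\in[s_{k-1},s_k)$ one still has $|\mathbb A\cap[1,n]|/n\to 0$; and in the eventually-constant edge case you should say explicitly that the finitely many initial blocks are also absorbed into~$\mathbb A$.
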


\begin{proof}
It is obvious that a shift-\im\ $\nu$ on $\{0,1\}^\N$ belongs to $\mathsf{conv}\{\delta_{\bar0},\delta_{\bar1}\}$ if and only if the cylinder associated to any block $B$ containing two different symbols has measure~$0$. 

If $S$ is such that $\N$ splits into $\mathbb A,\mathbb B,\mathbb C$, as described above, then any block $B$ containing two different symbols occurs in $y$ either at places where an interval from $\mathbb A$ is adjacent to an interval from $\mathbb B$ or vice versa, or so that its domain intersects $\mathbb A$. All these occurrences jointly occupy a set of density zero, so $\nu([B])=0$ for any measure $\nu\in\M_y$, hence $\M_y\subset\mathsf{conv}\{\delta_{\bar0},\delta_{\bar1}\}$ and $S$ is superficial.

If $S$ is superficial then the cylinder associated to any \emph{wrapped constant block} $B$ (i.e., a block of the form $1000\dots01$ or $0111\dots10$) of finite length $m$ has measure zero for any $\nu\in\M_y$, implying that $B$ appears in $y=\1_S$ with density zero. Let $n_0=1$ and for $m\ge1$ let $n_m > n_{m-1}$ be an integer not contained in any wrapped constant block of length $m-1$ or $m$, such that for any $n\ge n_m$ the wrapped constant blocks of lengths less than or equal to $m$ occupy in $y|_{[1,n]}$ a set of cardinality less than $\frac nm$. Let $\mathbb A$ be the union of the domains of all wrapped constant blocks~$B$ in $y$ which occur after the coordinate $n_m$ and before the coordinate $n_{m+1}$, where $m$ is the length of~$B$. The set $\mathbb A$ has density $0$ (for any $n$, $y|_{[1,n]} \cap \mathbb A$ consists of blocks of length at most $m$, where $n_m \le n < n_{m+1}$, which appear with frequency less than $\frac 1m$), and its complement splits into $\mathbb B$ and $\mathbb C$ as in the formulation of the proposition.
\end{proof}

\begin{thm}\label{nogood}
Let $\mu$ be any \im\ on a symbolic space $\Lambda^\N$. If an infinite set $S=\{s_1,s_2,\dots\}\subset\N$ is superficial then it does not destroy $\mu$-normality.
\end{thm}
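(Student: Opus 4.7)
The plan is to exhibit a single $\mu$-normal $x\in\Lambda^\N$ for which $x|_S$ is also $\mu$-normal, which by definition shows that $S$ does not destroy $\mu$-normality. The strategy exploits the decomposition provided by \Cref{superf}: write $\N=\mathbb A\sqcup\mathbb B\sqcup\mathbb C$, with $\mathbb B\subset S$, $\mathbb C\cap S=\emptyset$, $|\mathbb A\cap\{1,\dots,n\}|=o(n)$, and both $\mathbb B$ and $\mathbb C$ (when nonempty) consisting of intervals whose lengths tend to infinity. Pick any $y\in\Lambda^\N$ generic for $\mu$; such a $y$ exists by the fact recalled in Section~\ref{sec:preliminaries}.

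Next, define $x$ so that two ``parallel copies'' of $y$ sit on $S$ and on $\mathbb C$: set $x_{s_k}=y_k$ for every $k$ (so that $x|_S=y$); for $i\in\mathbb C$ set $x_i=y_\ell$ where $\ell$ is the rank of $i$ in $\mathbb C$; on the remainder $\mathbb A\setminus S$ assign $x_i$ to an arbitrary fixed symbol. By construction $x|_S=y$ is generic for $\mu$, which settles one half of the claim immediately.

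The main step is to verify that $x$ itself is generic for $\mu$. Fix a block $B$ of length $m$ and partition the starting positions $i\in\{1,\dots,n-m+1\}$ into three classes: those with $[i,i+m-1]$ contained in some maximal interval $I\subset\mathbb B$, those with $[i,i+m-1]$ contained in some maximal interval $J\subset\mathbb C$, and the rest. Because $|\mathbb A\cap\{1,\dots,n\}|=o(n)$ and the number of maximal intervals of $\mathbb B\cup\mathbb C$ meeting $\{1,\dots,n\}$ is $o(n)$ (since their lengths tend to infinity), the ``rest'' class contributes $o(n)$. On each $I\subset\mathbb B$, by construction $x|_I$ equals the consecutive sub-block $(y_{p_I},\dots,y_{p_I+|I|-1})$, where $p_I$ is the $S$-rank of $\min I$; summing across $\mathbb B$-intervals in $\{1,\dots,n\}$ shows that the total number of $\mathbb B$-interior $B$-occurrences equals the number of $B$-occurrences of $y$ along the union $E$ of these index ranges. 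The ``skips'' in $E$ between consecutive sub-blocks correspond precisely to $S$-ranks of positions in $S\cap\mathbb A$, so $|E|=|\mathbb B\cap\{1,\dots,n\}|$ and $|[\min E,\max E]\setminus E|=o(n)$; genericity of $y$ then yields the count $|\mathbb B\cap\{1,\dots,n\}|\cdot\mu([B])+o(n)$. An identical computation yields $|\mathbb C\cap\{1,\dots,n\}|\cdot\mu([B])+o(n)$ for the $\mathbb C$-interior class. Summing all contributions gives $\mathsf{Fr}_x(B)=\mu([B])$, and since $B$ was arbitrary, $x$ is generic for $\mu$.

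The main technical obstacle is precisely the passage in which the $\mathbb B$-interior count is identified with the $B$-count of $y$ along a long initial segment: one must simultaneously absorb the $m-1$ positions of boundary loss at each interval endpoint (negligible because there are only $o(n)$ intervals) and the density-zero ``skips'' in $y$-indices caused when two consecutive maximal $\mathbb B$-intervals are separated by $S\cap\mathbb A$-positions rather than by $\mathbb C$-positions (negligible because $|S\cap\mathbb A|\le|\mathbb A|$ has density zero). The degenerate cases in which $\mathbb B$ or $\mathbb C$ is empty — including $S$ having density zero — require no separate treatment, since $\mathbb A$ having density zero forces whichever of $\mathbb B,\mathbb C$ survives to have density one, so the argument above still accounts for $n\cdot\mu([B])+o(n)$ of the occurrences.
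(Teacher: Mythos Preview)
Your proof is correct and follows the same overall strategy as the paper: use the $\mathbb A\sqcup\mathbb B\sqcup\mathbb C$ decomposition from \Cref{superf}, take a reference $\mu$-generic sequence, and lay copies of it along the long intervals so that both $x$ and $x|_S$ inherit genericity. The only difference is cosmetic: the paper resets to an \emph{initial} segment of the reference sequence on each interval (so that $x|_S$ and $x|_{\mathbb B\cup\mathbb C}$ are concatenations of growing initial blocks, whose genericity is then quoted as ``elementary''), whereas you place one continuous running copy on $S$ and another on $\mathbb C$, which gives $x|_S=y$ for free but requires the explicit density bookkeeping you carry out to show $x$ is generic.
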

\begin{proof}
We need to construct a $\mu$-normal element $x\in\Lambda^\N$ such that $x|_S$ is also $\mu$-normal. We will proceed ``backwards'', i.e., we will first define $x|_S$ and then extend it to an $x\in\Lambda^\N$.
We start by choosing a $\mu$-normal element $z\in\Lambda^\N$ (we know that such an element exists).
According to Proposition~\ref{superf}, $\N$ splits into $\mathbb A,\mathbb B$ and $\mathbb C$.

If $\mathbb B$ is empty then $S\subset\mathbb A$. In this case we define $x|_S$ by letting $x_{s_i}=z_i$ for all $i\in\N$. Clearly, $x|_S=z$ so it is $\mu$-normal. On the complement of $S$ we let $x_i=z_i$. Since $x$ differs from $z$ only on the set $S$ of density zero, $x$ is $\mu$-normal. 

If $\mathbb B$ is nonempty then it is a disjoint union $\mathbb B=\bigcup_{n\ge1} I_n$, where each $I_n$ is an interval and the lengths $|I_n|$ tend to infinity. Let $b_n$ denote the first element in $I_n$. The set $S$
splits as $S=\bigcup_{n\ge1}(S\cap[b_n,b_{n+1}-1])$, where $S\cap[b_n,b_{n+1}-1]$ consists of $I_n$ possibly followed by some elements of $S\cap\mathbb A$. For each $n$ let us enumerate $S\cap[b_n,b_{n+1}-1]$ as $\{s_{n,1},s_{n,2},\dots,s_{n,i_n}\}$. We are in a position to define $x|_S$ by letting, for each $n\ge1$ and $j\in[1,i_n]$,
$$
x_{s_{n,j}}=z_j.
$$
In this manner, $x|_S$ is built of the initial blocks $z|_{[1,i_n]}$ where $i_n$ tends to infinity. It is elementary to see that, due to this structure, $x|_S$ is $\mu$-normal. 

In remains to define $x$ on the complement $S^c$ of $S$. This complement consists of $\mathbb C$ and $S^c\cap\mathbb A$. If $\mathbb C$ is empty then $S^c$ has density zero and $x$ is $\mu$-normal regardless of how we define it on $S^c$. Otherwise we define $x|_{S_c}$ similarly as $x|_S$: the set $\mathbb C$ is a disjoint union $\mathbb C=\bigcup_{n\ge1} J_n$, where each $J_n$ is an interval and the lengths $|J_n|$ tend to infinity. For each $n$ let us enumerate $J_n$ as $\{t_{n,1},t_{n,2},\dots,t_{n,l_n}\}$. We define $x|_{\mathbb C}$ by letting, for each $n\ge1$ and $j\in[1,l_n]$,
$$
x_{t_{n,j}}=z_j.
$$
On the set $S^c\cap\mathbb A$ we define $x$ arbitrarily. Note that $x|_{\mathbb B\cup\mathbb C}$ is built of initial blocks of $z$ whose lengths tend to infinity and thus it is $\mu$-normal. Since the remaining set $\mathbb A$ has density zero, $x$ is $\mu$-normal as well. 
\end{proof}

For completeness, we also discuss $\mu$-normality preservation of superficial sets. 
In this aspect, superficial sets split into two differently behaving subclasses.
\begin{enumerate}[(a)]
    \item Superficial sets with lower density zero, i.e., such that $\delta_{\bar 0}\in\M_y$;
    \item Superficial sets with positive lower density, i.e., such that $\delta_{\bar 0}\notin\M_y$.
\end{enumerate}

Sets from subclass (a) do not preserve $\mu$-normality for any \im\ $\mu$, which is a particular case of the following more general fact:

\begin{thm}\label{ojojoj}
Any infinite set $S\subset\N$ of lower density $0$ does not preserve (simple) $\mu$-normality for any \im\ $\mu$.
\end{thm}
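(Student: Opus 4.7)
The plan is to construct a $\mu$-normal element $x\in\Lambda^\N$ whose restriction $x|_S$ fails simple $\mu$-normality. After excluding the trivial case $|\Lambda|=1$, I pick $a\in\Lambda$ with $\mu([a])<1$ (if $\mu=\delta_{\bar c}$ is a Dirac mass, take $a\ne c$), and fix $\alpha\in(\mu([a]),1)$. The idea is to begin with an arbitrary $\mu$-normal element $z$ (which exists) and perturb it to a new sequence $x$ that agrees with $z$ outside a carefully chosen density-zero set $R\subset\N$, while taking the value $a$ on $R$.

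Using $\underline d(S)=0$, I extract a subsequence $N_k\uparrow\infty$ along which $m_k:=|S\cap[1,N_k]|$ satisfies $m_k/N_k\to 0$. By further thinning I additionally impose $m_{k+1}\ge 2m_k$ and $m_{k+1}/N_k\to 0$: the former is possible because $m_N\to\infty$, and the latter reflects the fact that $\liminf_N m_N/N=0$ persists on sufficiently sparse sub-subsequences. Then I let $R=\bigsqcup_k R_k$, where each $R_k$ is any subset of $S\cap(N_{k-1},N_k]$ of cardinality $\lceil\alpha(m_k-m_{k-1})\rceil$. By construction $|R\cap[1,N_k]|=\lceil\alpha m_k\rceil$, while for $n\in(N_k,N_{k+1}]$ the estimate
\[
|R\cap[1,n]|\;\le\;\alpha m_k+|R_{k+1}|\;\le\;\alpha m_{k+1},
\]
together with $n\ge N_k$ and $m_{k+1}/N_k\to 0$, yields $|R\cap[1,n]|/n\to 0$, i.e., $\bar d(R)=0$.

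Setting $x_i=a$ for $i\in R$ and $x_i=z_i$ otherwise, the density-zero modification preserves all block frequencies of $z$, so $x$ remains $\mu$-normal. Along the subsequence $n=m_k$, however, we have $s_n=N_k$ and
\[
\frac{|\{i\le n:x_{s_i}=a\}|}{n}\;\ge\;\frac{|R\cap[1,N_k]|}{m_k}\;\ge\;\alpha\;>\;\mu([a]),
\]
so $\limsup_{n\to\infty}|\{i\le n:x_{s_i}=a\}|/n\ge\alpha>\mu([a])$; hence $\mathsf{Fr}_{x|_S}(a)$ either does not exist or differs from $\mu([a])$, and $x|_S$ is not simply $\mu$-normal. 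The main obstacle in this plan is the simultaneous extraction of the subsequence $(N_k)$ satisfying all three conditions—most subtly the coexistence of $m_{k+1}/N_k\to 0$ with $m_{k+1}\ge 2m_k$—which requires a careful diagonalization exploiting both $m_N\to\infty$ and $\liminf_N m_N/N=0$, especially when $\bar d(S)>0$ so that $m_N/N$ genuinely oscillates.
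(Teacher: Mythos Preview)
Your strategy coincides with the paper's: pick a $\mu$-normal $z$, overwrite it by a fixed symbol $a$ (with $\mu([a])<1$) on a density-zero subset $R\subset S$, and observe that the resulting $x$ stays $\mu$-normal while $x|_S$ sees $a$ with upper frequency exceeding $\mu([a])$. The paper obtains such an $R$ by invoking \cite[Lemma~5.4]{BDV}, which says that if $\underline d(S)=0$ then some $S'\subset S$ has $d(S')=0$ yet $\limsup_n |S'\cap[1,n]|/|S\cap[1,n]|\ge 2/3$; you instead try to build $R$ directly via the sequence $(N_k)$.

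The genuine gap is precisely the extraction of $(N_k)$. After first choosing $N_k$ with $m_k/N_k\to 0$, you assert that ``by further thinning'' one can additionally force $m_{k+1}/N_k\to 0$. But thinning moves this quantity in the wrong direction: passing to a subsequence $(N_{k_j})$ replaces the ratio by $m_{k_{j+1}}/N_{k_j}$, and since $m$ is nondecreasing and $k_{j+1}\ge k_j+1$, one has $m_{k_{j+1}}\ge m_{k_j+1}$ while $N_{k_j}$ is unchanged---so the ratio can only increase. The phrase ``$\liminf_N m_N/N=0$ persists on sufficiently sparse sub-subsequences'' does not repair this, and your final paragraph concedes the difficulty without resolving it. What is actually needed here is the content of \cite[Lemma~5.4]{BDV} (or an equivalent construction built from scratch), which is a genuine combinatorial statement rather than a routine diagonalization; without it the argument is incomplete. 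There are also minor slips that do not affect the logic once the main gap is fixed: $|R\cap[1,N_k]|=\sum_{j\le k}\lceil\alpha(m_j-m_{j-1})\rceil\in[\alpha m_k,\alpha m_k+k]$ rather than $\lceil\alpha m_k\rceil$, so the displayed chain $\alpha m_k+|R_{k+1}|\le\alpha m_{k+1}$ is off by $O(k)$; and $s_{m_k}\le N_k$ rather than $s_{m_k}=N_k$, though since $R\subset S$ the count $|R\cap[1,s_{m_k}]|=|R\cap[1,N_k]|$ is unaffected.
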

\begin{proof}
First observe that if $S$ has density zero then we can choose any $\mu$-normal element $x$ and change it arbitrarily along $S$. The modified element $x'$ remains $\mu$-normal while $x'|_{S}$ can be any symbolic element, in particular one that is not simply $\mu$-normal.

Suppose now that $S\subset\N$ has lower density $0$ and positive upper density. By~\cite[~Lemma 5.4]{BDV} (applied to the classical F\o lner \sq\ in $\N$, $F_n=\{1,2,\dots,n\}$), there exists a subset $S'\subset S$ which, on the one hand, has density $0$, but on the other hand satisfies 
\[
\limsup_{n\to\infty}\frac{\#(S'\cap\{1,2,\dots,n\})}{\#(S\cap\{1,2,\dots,n\})}\ge\frac23.
\]
Now, we choose any $\mu$-normal element $x$ and we replace all symbols $x_s$, with $s\in S'$, by some constant symbol $a\in\Lambda$ such that $\mu([a])\le\frac12$ (clearly, such a symbol exists). This modification remains $\mu$-normal, while the upper density of the set where $a$ appears in $x|_S$ is at least $\frac23$ and so $x|_S$ is not simply $\mu$-normal. 
\end{proof}

\begin{thm}\label{jeden}
Any superficial set $S=\{s_1,s_2,\dots\}\subset\N$ with positive lower density (i.e., from the class (b) above) preserves $\mu$-normality for any ergodic measure $\mu$. 
\end{thm}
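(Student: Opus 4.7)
My plan is a joining argument in the product system $\Lambda^\N\times\{0,1\}^\N$, exploiting the fine description of superficial sets from \Cref{superf}. Fix an arbitrary $\mu$-normal $x\in\Lambda^\N$ and put $y=\1_S$. Any measure $\xi$ quasi-generated by the pair $(x,y)$ is a joining of $\mu$ with some $\nu\in\M_y$; by superficiality, $\nu=c\delta_{\bar0}+(1-c)\delta_{\bar1}$ for some $c\in[0,1]$, and $\underline d(S)>0$ forces $c<1$. Because $\bar0$ and $\bar1$ are fixed points, disintegrating $\xi$ over its second marginal — and using that a joining of an invariant measure with a point mass at a fixed point is necessarily a product — gives
\[
\xi = c(\mu_0\times\delta_{\bar0}) + (1-c)(\mu_1\times\delta_{\bar1})
\]
for some shift-invariant $\mu_0,\mu_1$ on $\Lambda^\N$ satisfying $c\mu_0+(1-c)\mu_1=\mu$. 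Here the ergodicity of $\mu$ enters as extremality in the simplex of invariant measures: when $c\in(0,1)$ this forces $\mu_0=\mu_1=\mu$, while if $c=0$ one trivially has $\xi=\mu\times\delta_{\bar1}$. In every case, $\xi=c(\mu\times\delta_{\bar0})+(1-c)(\mu\times\delta_{\bar1})$ with $c<1$.

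Fix a block $B\in\Lambda^k$ and let $\mathbf 1_k=(1,\ldots,1)$ denote the block of $k$ ones. A direct computation gives
\[
\xi\bigl([B]\times[\mathbf 1_k]\bigr)=(1-c)\mu([B]),\qquad \xi\bigl(\Lambda^\N\times[\mathbf 1_k]\bigr)=1-c.
\]
Quasi-genericity of $(x,y)$ then says that, along any $n_i$ with $A_{n_i}((x,y))\to\xi$, the normalised counts $\tfrac1{n_i}\#\{j\le n_i:\sigma^j(x,y)\in[B]\times[\mathbf 1_k]\}$ and $\tfrac1{n_i}\#\{j\le n_i:\sigma^j(x,y)\in\Lambda^\N\times[\mathbf 1_k]\}$ tend to $(1-c)\mu([B])$ and $1-c$, respectively. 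Their ratio tends to $\mu([B])$, independently of $c$, and by the standard ``all subsequential limits agree'' argument convergence upgrades to convergence along the full sequence of $n$.

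It remains to identify this ratio with $\mathsf{Fr}_{x|_S}(B)$. By \Cref{superf}, apart from a set of $j$'s of density zero in $\N$ (those in $\mathbb A\cap S$, or within $k-1$ of the right end of some $I_n\subset\mathbb B$), the condition $j,j+1,\ldots,j+k-1\in S$ coincides with $j$ lying in the interior of some $I_n\subset\mathbb B$; this exceptional set has density zero because $\mathbb A$ has density zero and $|I_n|\to\infty$, and since $\underline d(S)>0$ it also has density zero in $S$. For such ``good'' $j=s_i$, the consecutive entries $s_i,s_{i+1},\ldots,s_{i+k-1}$ of $S$ are genuinely consecutive integers, so the $k$-window of $x|_S$ at position $i$ coincides with $x_{s_i}x_{s_i+1}\cdots x_{s_i+k-1}$. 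Counting such good windows equal to $B$ produces exactly the numerator above, and $\#(S\cap[1,n_i])$ matches the denominator up to a density-zero error. Hence $\mathsf{Fr}_{x|_S}(B)=\mu([B])$, and since $B$ was arbitrary, $x|_S$ is $\mu$-normal.

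The joining half is essentially formal — extremality of ergodic measures, plus the rigid joining structure with point masses at fixed points. I expect the main obstacle to be the last paragraph: the bookkeeping that translates product-system cylinder frequencies into block frequencies of $x|_S$. That step uses both structural features of superficial sets from \Cref{superf} (long intervals in $\mathbb B$ and density-zero ``glue'' $\mathbb A$), together with the positive lower density of $S$ to upgrade ``density zero in $\N$'' to ``density zero in $S$''.
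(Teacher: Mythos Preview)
Your proof is correct. Both your argument and the paper's hinge on the same decisive step: writing $\mu$ as a convex combination $c\mu_0+(1-c)\mu_1$ with $1-c>0$ and invoking extremality of the ergodic $\mu$ to force $\mu_1=\mu$. The packaging, however, is genuinely different. The paper works directly with $x|_S$: it takes an arbitrary subsequential limit $\nu$ of the empirical measures of $x|_S$, then uses the decomposition $\N=\mathbb A\cup\mathbb B\cup\mathbb C$ from \Cref{superf} to argue that $x|_{\mathbb B}$ quasi-generates the same $\nu$ (since $S\cap\mathbb A$ has relative density zero in $S$), while $x|_{\mathbb C}$ quasi-generates some $\nu'$, yielding $\mu=(1-\gamma)\nu+\gamma\nu'$ and hence $\nu=\mu$. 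You instead pass to the product system $\Lambda^\N\times\{0,1\}^\N$, observe that any $\xi\in\M_{(x,y)}$ disintegrates over the two-atom second marginal as $c(\mu_0\times\delta_{\bar0})+(1-c)(\mu_1\times\delta_{\bar1})$, apply extremality to get $\mu_1=\mu$, and then read off $\mathsf{Fr}_{x|_S}(B)$ from the cylinder $[B]\times[\mathbf 1_k]$. Your $\mu_0,\mu_1$ are precisely the paper's $\nu',\nu$, so the two decompositions are dual descriptions of the same object. What your route buys is a cleaner measure-theoretic formulation that parallels the joining arguments used elsewhere in the paper (e.g.\ in \Cref{distcf} and \Cref{main1}); what the paper's route buys is that it avoids the final bookkeeping paragraph entirely, since it never has to translate product-cylinder frequencies back into block frequencies of $x|_S$. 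Your bookkeeping is correct---the key point, that positions $j\in S$ with $\{j,\dots,j+k-1\}\not\subset S$ have density zero in $S$, follows exactly as you say from $|I_n|\to\infty$, $d(\mathbb A)=0$, and $\underline d(S)>0$---but it is the part of your write-up that would benefit most from being spelled out in full.
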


\begin{proof} Let $\mu$ be an ergodic measure on $\Lambda^\N$, let $x\in\Lambda^\N$ be $\mu$-normal, and let $S$ be superficial with positive lower density. We need to show that $x|_S$ is also generic for $\mu$. Denote, as usual, $y=\mathbbm1_S$.

Let $(n_k)_{k\ge1}$ be an increasing \sq\ along which $x|_S$ quasi-generates some \im~$\nu$. Note that $(n_k)_{k\ge1}$ refers to the enumeration with respect to to $S$, while the corresponding \sq\ within $\N$ is $(s_{n_k})_{k\ge1}$. 
We will show that $\nu=\mu$. By passing to a sub\sq\ we can assume that along $(s_{n_k})_{k\ge1}$, $y$ also quasi-generates an \im. By assumption, this measure has the form $\gamma\delta_{\bar0}+(1-\gamma)\delta_{\bar1}$, where $\gamma\in[0,1)$ ($\gamma<1$ follows from positive lower density of $S$). By Proposition~\ref{superf}, $\N$ splits into three parts $\mathbb A,\mathbb B$ and $\mathbb C$, where $\mathbb A$ has density zero while $\mathbb B$ and $\mathbb C$ consist of longer and longer intervals on which $y$ is constant. Note that $1-\gamma$ equals the $(s_{n_k})$-density of $S$, which also equals the $(s_{n_k})$-density of $\mathbb B\subset S$. It is now crucial (and easy to see) that since $\mathbb B$ has positive $(s_{n_k})$-density while $\mathbb A$ has $(s_{n_k})$-density zero, the set $S\cap \mathbb A$ has $(n_k)$-density zero (relative with respect to $S$). As a consequence, $x|_{\mathbb B}$ generates, along $(n_k)_{k\ge1}$, the same measure as does $x|_S$, i.e., the measure $\nu$. Note that $\mathbb C$ has the complementary $(s_{n_k})$-density~$\gamma$. If $\gamma=0$ then we let $\nu'$ be any \im, otherwise, by passing to a sub\sq\ once again, we can assume that $x|_{\mathbb C}$ generates, along the sub\sq\ $(m_k)_{k\ge1}$ of $\mathbb C$ with $m_k = \#\{ c \in \mathbb C : c \le s_{n_k} \}$, the same \im\ $\nu'$. It is now an elementary observation that in either case 
$$
\mu = (1-\gamma)\nu+\gamma\nu',
$$ 
with $1-\gamma>0$. Because $\mu$ is ergodic, it is an extreme point of the set of \im s, so we have $\nu=\mu$ as needed.
\end{proof}

To summarize, superficial sets of class (a) neither preserve nor destroy $\mu$-normality for any \im\ $\mu$, while superficial sets of class (b) preserve (hence do not destroy) $\mu$-normality for any ergodic measure $\mu$. This settles completely the question about $\mu$-normality preservation/destruction for superficial sets and there is no point in including such sets in our further (more interesting) discussion.

\subsection{Main results}
We prove three main theorems. The first two both involve systems with~CPE. The first one is about deterministic sets and is a vast generalization of the Heersing--Vandehey result:
\begin{thm} \label{main1}
	Let $\mu$ be a shift-\im\ on a symbolic space $\Lambda^\N$ that has CPE but is not a Bernoulli 
shift (i.e., not i.i.d.). 
        Any infinite non-superficial deterministic set $S$ destroys $\mu$-normality.
\end{thm}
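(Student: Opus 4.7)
The plan is to exploit Furstenberg's disjointness of CPE systems from zero-entropy systems to impose a product structure on all joinings of $\mu$ with measures derived from $S$, derive an explicit Palm-type formula for the measures that $x|_S$ can quasi-generate, and then use the non-i.i.d.\ structure of $\mu$ together with non-superficiality of $S$ to show that none of these measures equals $\mu$.

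Fix a $\mu$-normal $x \in \Lambda^{\mathbb N}$ and put $y = \1_S$. By \Cref{superf} and non-superficiality of $S$, I choose an ergodic $\nu \in \M_y$ with $\nu \notin \mathsf{conv}\{\delta_{\bar 0}, \delta_{\bar 1}\}$; such a $\nu$ necessarily has $\nu([1]) > 0$ and $\nu([10]) > 0$. Pass to a subsequence $(n_k)_{k\ge 1}$ along which $y$ quasi-generates $\nu$. Any weak-$*$ accumulation point of the atomic measures of the pair $(x,y)$ along $(n_k)$ is a joining of $\mu$ and $\nu$; since $\nu$ is zero-entropy ($S$ is deterministic) and $\mu$ has CPE, Furstenberg's theorem forces every such joining to be $\mu \times \nu$, so $(x,y)$ in fact generates $\mu \times \nu$ along $(n_k)$. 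Set $m_k = \#(S \cap \{1, \ldots, n_k\})$; then along $(m_k)$ the sequence $x|_S$ quasi-generates some measure $\tilde\mu$. A Fubini computation applied to the product limit yields, for every block $B = b_1 \cdots b_\ell$,
\[
\tilde\mu([B]) = \frac{1}{\nu([1])} \sum_{g_1, \ldots, g_{\ell-1} \geq 1} \nu\bigl([1\,0^{g_1-1}\,1\,\cdots\,0^{g_{\ell-1}-1}\,1]\bigr) \, \mu\bigl(\{z: z_1 = b_1,\, z_{1+g_1} = b_2,\,\ldots,\, z_{1+g_1+\cdots+g_{\ell-1}} = b_\ell\}\bigr).
\]
By ergodicity of $\nu$ together with Poincar\'e recurrence, the coefficients above form a probability distribution $P_\nu$ on $\mathbb N^{\ell-1}$; the non-superficiality of $\nu$ then guarantees that $P_\nu$ is not concentrated at $(1, 1, \ldots, 1)$.

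The main obstacle is to conclude from this formula that $\tilde\mu \neq \mu$. Since $\mu$ is not i.i.d., there exist a minimal $\ell \geq 2$ and a block $B_0 = b_1 \cdots b_\ell$ with $\mu([B_0]) \neq \prod_{i=1}^\ell \mu([b_i])$. Because CPE implies mixing of all orders, the $\mu$-values appearing in the formula tend to $\prod_i \mu([b_i])$ as $\min_i g_i \to \infty$, while at $(1, \ldots, 1)$ the value is precisely $\mu([B_0])$. Thus $\tilde\mu([B_0])$ is a genuinely nontrivial convex combination of values interpolating between two distinct extremes. The delicate step is to rule out that this weighted average happens to equal $\mu([B_0])$ by coincidence; this will likely require exploiting freedom in the choice of $\nu$ among the ergodic elements of $\M_y \setminus \mathsf{conv}\{\delta_{\bar 0}, \delta_{\bar 1}\}$ (or iterating the selection procedure to access richer gap distributions), and examining several blocks $B_0$ at once to turn the non-i.i.d.\ assumption on $\mu$ into an equality that genuinely fails.

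Once $\tilde\mu \neq \mu$ is established for even a single non-superficial $\nu \in \M_y$, the sequence $x|_S$ quasi-generates a measure different from $\mu$ and hence is not $\mu$-normal. Because the choice of $\nu$ depends only on $S$ and not on $x$, the same conclusion applies uniformly to every $\mu$-normal $x$, showing that $S$ destroys $\mu$-normality.
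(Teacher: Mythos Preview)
Your overall architecture matches the paper's: form the pair $(x,y)$, use CPE $+$ determinism to force the product joining $\mu\times\nu$, and read off the frequencies of blocks in $x|_S$ as convex combinations of $\mu$-measures of spread cylinders weighted by the $\nu$-gap distribution. The paper does exactly this. However, the step you yourself flag as ``delicate'' is a genuine gap, and your proposed ways of closing it point in the wrong direction.

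The problem is that for your test block $B_0$ the values $\mu(\{z:z_1=b_1,\dots,z_{1+g_1+\cdots+g_{\ell-1}}=b_\ell\})$ need not all lie on one side of $\mu([B_0])$; they merely converge to $\prod_i\mu([b_i])$ as $\min_i g_i\to\infty$. So the weighted average can equal $\mu([B_0])$ by accident, and there is no mechanism to rule this out by varying $\nu$ (you have no freedom: $\nu$ must come from $\M_y$, which is fixed by $S$ and could be a single point) or by looking at several $B_0$'s simultaneously. Also, your insistence on an \emph{ergodic} $\nu\in\M_y\setminus\mathsf{conv}\{\delta_{\bar0},\delta_{\bar1}\}$ is unjustified and unnecessary; $\M_y$ need not contain such an ergodic measure, but any invariant $\nu$ with $\nu([1])>0$ suffices for the recurrence argument.

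The paper closes the gap not by adjusting $\nu$ but by changing the test block. Using mixing of all orders and the minimality of $\ell$ (so that the system is $k_j$-Bernoulli for every $k_j<\ell$), one shows that the set $Q=\{\bar q:\mu([B^{\bar q}])\ge\Pi(B)+\varepsilon\}$ is finite, and then takes a $\prec$-maximal $\bar p_0\in Q$. For this $\bar p_0$ one has the strict inequality $\mu([B^{\bar q}])<\mu([B^{\bar p_0}])$ for \emph{every} $\bar q\succ\bar p_0$ (this is \Cref{lem spread block}). Now test the frequency of the spread block $B^{\bar p_0}$ in $x|_S$ rather than of $B_0$ itself: the resulting convex combination involves only $\mu([B^{\bar q}])$ with $\bar q\succcurlyeq\bar p_0$, all terms except the one at $\bar q=\bar p_0$ are strictly smaller than $\mu([B^{\bar p_0}])$, and non-superficiality forces the coefficient of that one exceptional term to be $<1$. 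Hence the lower frequency of $B^{\bar p_0}$ in $x|_S$ is strictly below $\mu([B^{\bar p_0}])$, and $x|_S$ is not $\mu$-normal. The missing idea, in short, is to replace $B_0$ by a suitably pre-spread version of it so that every further spreading provably decreases the measure.
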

Note that the assumptions of \Cref{main1} include measures that are isomorphic to Bernoulli shifts but are not ``genuine'' Bernoulli shifts (a system equipped with such a measure is often referred to as a \emph{Bernoulli system}).
The property which distinguishes Bernoulli shifts from other systems with CPE (and in fact from all other ergodic systems) is \emph{spreadability}, which means, roughly speaking, that the measure of a block remains fixed when the block is ``spread'' (the rigorous definition is provided in \Cref{sec CPE not B}). This property is irrelevant when simple $\mu$-normality is concerned, as blocks of length~$1$ cannot be spread. This is why simple $\mu$-normality is preserved in many cases when $\mu$-normality is not preserved (or is even destroyed).

\medskip
The next theorem is about non-deterministic sets (note that any non-deterministic set is non-superficial):
\begin{thm} \label{main2}
   Let $\mu$ be a shift-\im\ on a symbolic space $\Lambda^\N$ that has CPE. Any non-deterministic set does not preserve $\mu$-normality.
\end{thm}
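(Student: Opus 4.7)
\textbf{Proof plan for \Cref{main2}.}

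My plan is to construct a $\mu$-normal $x\in\Lambda^\N$ whose restriction $x|_S$ fails \emph{simple} $\mu$-normality, hence fails $\mu$-normality. If $\underline d(S)=0$, the conclusion follows from \Cref{ojojoj}. So assume $\underline d(S)>0$. Non-determinism of $S$ produces a measure $\nu\in\M_y$ with $h(\nu)>0$ (where $y=\1_S$), quasi-generated along an increasing \sq\ $(n_k)$; note $\nu([1])\ge\underline d(S)>0$.

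The first ingredient is a joining $\xi$ of $\mu$ and $\nu$ with single-coordinate non-product marginal: $\xi([a]\times[1])\ne\mu([a])\nu([1])$ for some $a\in\Lambda$. Because $\mu$ has CPE and $\nu$ has positive entropy, Sinai's factor theorem together with Ornstein's isomorphism theorem provides a common Bernoulli factor $(B^\N,\beta,\sigma)$ of $\mu$ and $\nu$ via factor maps $\pi_1:\Lambda^\N\to B^\N$ and $\pi_2:\{0,1\}^\N\to B^\N$. The relatively independent joining over $\beta$ is non-trivial; by passing to natural extensions and time-shifting the factor maps so they each ``see'' coordinate zero (equivalently, composing $\pi_2$ with a suitable Bernoulli automorphism), one obtains the required single-coordinate non-product behaviour. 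The non-triviality of the conditional expectations $E_\mu[\1_{x_0=a}\mid\pi_1]$ and $E_\nu[\1_{y_0=1}\mid\pi_2]$---which follows from CPE and positive entropy once the factor maps see the zeroth coordinate---guarantees the required correlation.

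The key technical step is to realise $\xi$ with the given $y$: construct $x\in\Lambda^\N$ such that $(x,y)$ quasi-generates $\xi$ along a sub\sq\ of $(n_k)$. The natural construction sets $b:=\pi_2(y)\in B^\N$ (which is $\beta$-quasi-generic) and lifts $b$ through $\pi_1$ to some $x$ with $\pi_1(x)=b$, drawn ``conditionally generically'' for the disintegration $\mu(\cdot\mid\pi_1=b)$. Ornstein-style block coding or Rokhlin tower techniques produce such lifts for $\beta$-almost every base, so after refining $(n_k)$ we may assume our specific $b$ belongs to the full-measure set; gluing a $\mu$-generic tail if necessary upgrades $x$ to fully $\mu$-normal. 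The frequency of $a$ in $x|_S$ along the appropriate sub\sq\ then equals $\xi([a]\times[1])/\nu([1])\ne\mu([a])$, so $x|_S$ is not simply $\mu$-normal, completing the argument.

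The main obstacle is this realisation step: producing a concrete symbolic $x$ from the given $y$ that realises the abstract joining $\xi$ requires either the full Ornstein isomorphism machinery or a Furstenberg-type joining argument, with care needed to ensure our particular $y$ admits the required conditional lift. A secondary subtlety lies in engineering the single-coordinate non-product condition from a generic non-product joining, which demands that the factor maps be arranged to ``see'' coordinate zero on both sides.
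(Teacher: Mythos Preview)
Your plan has a genuine gap at both of the places you yourself flag as obstacles, and in fact your strategy would, if it worked, resolve an open problem rather than prove \Cref{main2}.

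First ingredient: the existence of a joining $\xi$ of $\mu$ and $\nu$ with $\xi([a]\times[1])\ne\mu([a])\nu([1])$ for some $a\in\Lambda$ is exactly \Cref{like5.5} (restricted to $\mu$ with CPE), which the paper lists as open and declares ``nontrivial already in the class of non-i.i.d.\ Bernoulli systems.'' Your sketch via a common Bernoulli factor and a relatively independent joining does not close this: a relatively independent joining over a common factor makes the \emph{$\sigma$-algebras generated by the factor maps} dependent, but there is no reason the zero-coordinate partitions $\{[a]\}$ and $\{[0],[1]\}$ should inherit any correlation. The proposed ``time-shifting so the factor maps see coordinate zero'' and ``composing $\pi_2$ with a Bernoulli automorphism'' are not operations that guarantee $E_\mu[\1_{[a]}\mid\pi_1]$ or $E_\nu[\1_{[1]}\mid\pi_2]$ become nonconstant; factor maps of shifts are typically infinitary codes and need not depend on any single coordinate in a usable way.

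Second ingredient: even granting such a $\xi$, your realisation step is not justified. The map $\pi_2$ is defined only $\nu$-a.e., and the specific point $y=\1_S$ is fixed in advance; ``refining $(n_k)$'' cannot move $y$ into a prescribed full-measure set. Kamae's Theorem~2 (which the paper invokes) only produces a $\mu$-normal $x$ realising the \emph{product} joining $\mu\times\nu$ with a given $y$; there is no analogous off-the-shelf result for an arbitrary joining.

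Finally, note that your target---failure of \emph{simple} $\mu$-normality---is strictly stronger than \Cref{main2} and is precisely the open \Cref{former claim}.

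The paper's actual proof avoids all of this by splitting into two cases. If $\mu$ is i.i.d., Kamae's theorem already gives failure of simple $\mu$-normality. If $\mu$ has CPE but is not i.i.d., the paper uses Kamae's Theorem~2 to find a $\mu$-normal $x$ with $\binom{x}{y}$ quasi-generating the \emph{product} $\mu\times\nu$ along $(N_k)$, and then exploits the failure of spreadability via \Cref{lem spread block}: there is a block $B^{\pp_0}$ with $\mu([B^{\qq}])<\mu([B^{\pp_0}])$ for all $\qq\succ\pp_0$, and the same convex-combination estimate as in \Cref{main1} shows the lower frequency of $B^{\pp_0}$ in $x|_S$ is strictly below $\mu([B^{\pp_0}])$. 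Thus $x|_S$ is not $\mu$-normal (though possibly still simply $\mu$-normal). The product joining is easy to realise and suffices; no non-product joining is needed.
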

The last main theorem deals with simple $\mu$-normality preservation:
\begin{thm} \label{main3}
     Let $S$ be a set of positive lower density. If a shift-\im\ $\mu$ on a symbolic space $\Lambda^\N$ is disjoint from all measures derived from $S$, then $S$ preserves simple $\mu$-normality.
\end{thm}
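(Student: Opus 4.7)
The plan is to reduce the statement to a computation about frequencies in the joint sequence $(x,y)\in(\Lambda\times\{0,1\})^\N$, where $y=\mathbbm1_S$ and $x$ is $\mu$-normal. Fix a symbol $a\in\Lambda$; the goal is to show $\mathsf{Fr}_{x|_S}(a)=\mu([a])$. The key identity is bookkeeping: if $s_N$ denotes the $N$-th element of $S$, then
\[
\frac{\#\{i\le N:(x|_S)_i=a\}}{N}=\frac{\#\{n\le s_N:x_n=a,\ y_n=1\}}{s_N}\cdot\frac{s_N}{N},
\]
and $N/s_N$ is essentially the frequency of $1$'s in $y|_{[1,s_N]}$.

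Next I would invoke the standard observation recalled in the preliminaries: since $x$ is generic for $\mu$, any accumulation point $\xi$ of the empirical measures $A_n(x,y)$ (on the product system under $\sigma\times\sigma$) is a joining of $\mu$ with some measure $\nu\in\M_y$ derived from $S$. By the disjointness hypothesis, the only such joining is $\xi=\mu\times\nu$. In particular $\xi([a]\times[1])=\mu([a])\,\nu([1])$ for any such $\xi$.

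Now let $(N_k)$ be any subsequence along which the left-hand side of the displayed equation converges. Passing to a further subsequence, I may assume that along $s_{N_k}\to\infty$ both $(x,y)$ quasi-generates some $\xi=\mu\times\nu$ and $y$ quasi-generates $\nu$. Then the first factor on the right converges to $\mu([a])\,\nu([1])$ while $s_{N_k}/N_k\to 1/\nu([1])$. Here positive lower density of $S$ enters crucially: it guarantees $\nu([1])\ge\underline d(S)>0$ for every $\nu\in\M_y$, so the division is legitimate. The product of the two limits is $\mu([a])$, independent of the subsequence chosen. Since every convergent subsequence of the bounded sequence on the left has the same limit $\mu([a])$, the full limit exists and equals $\mu([a])$, which is precisely the assertion that $x|_S$ is simply $\mu$-normal.

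The proof contains no real obstacle beyond the joining/disjointness step, which is essentially packaged in the preliminaries. The only point requiring care is that the two subsequential limits (of the joint frequency and of $N/s_N$) are extracted along the \emph{same} subsequence so that $\nu$ appears in both factors and cancels; this is ensured by one diagonal extraction of subsequences.
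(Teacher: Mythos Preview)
Your proof is correct and follows essentially the same approach as the paper: relate the frequency of a symbol in $x|_S$ to the joint frequency in $\binom{x}{y}$, observe that any accumulation point of the empirical measures of $\binom{x}{y}$ is a joining $\xi$ of $\mu$ with some $\nu\in\M_y$, use the disjointness hypothesis to force $\xi=\mu\times\nu$, and use positive lower density to ensure $\nu([1])>0$ so the factors combine to $\mu([a])$. The paper phrases this as a contrapositive (assume the limit fails and produce a non-product joining), whereas you do it directly by showing every subsequential limit equals $\mu([a])$; the content is identical.
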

In view of Theorem \ref{ojojoj}, the positive lower density assumption is necessary (it does not follow automatically from the existence of a measure disjoint from all measures in $\M_y$).

\section{Deterministic sets and CF-normality} \label{sec destroying CF normality}

Before we prove the main result \Cref{main1} in full generality, i.e., for systems with CPE that are not Bernoulli shifts, we show that Heersink and Vandehey have practically provided us with all that is needed to prove that deterministic sets destroy CF-normality.
The proof uses two key facts. The first one is disjointness between the Gauss system and any system of entropy zero. The second one is that the strict inequality $\lambda([1,1])>\lambda([1,*^n,1])$ holds for all $n \ge 1$, where~$\lambda$~is the Gauss measure and $[1,*^n,1]$ denotes, for $n\ge 0$, the set of all numbers whose continued fraction expansions start with~$1$ and have~$1$ as the $(n+2)^\text{nd}$ digit. This second fact is exactly~\cite[Lemma~3.1]{HV}, and we take it for granted.
\begin{thm}\label{distcf}
 Non-superficial deterministic sets destroy CF-normality. That is, if the number $x = [0;a_1,a_2,\dots]$ is CF-normal and $S = \{s_1,s_2,\dots\}$ is a non-superficial deterministic set, then $x|_S = [0; a_{s_1}, a_{s_2}, a_{s_3}, \dots]$ is \emph{not} CF-normal.
\end{thm}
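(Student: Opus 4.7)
The plan is to identify $x$ with its continued-fraction expansion $a=(a_1,a_2,\dots)\in\N^\N$ (generic for the Gauss measure $\lambda$) and set $y=\1_S$. It suffices to exhibit a single block whose frequency in $x|_S$ differs from its $\lambda$-measure, and I would target the block $(1,1)$ in order to exploit the inequality from \cite[Lemma~3.1]{HV}.

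First I would extract a measure witnessing non-superficiality. Since $S$ is non-superficial, some $\nu\in\M_y$ lies outside $\mathsf{conv}\{\delta_{\bar 0},\delta_{\bar 1}\}$, and hence charges a cylinder containing both symbols. Any such cylinder sits inside a shift of $[01]$ or $[10]$, and because $\nu([01])=\nu([10])$ by shift-invariance, one gets $\nu([10])>0$. Decomposing $[10]=\bigsqcup_{g\ge 2}[10^{g-1}1]\sqcup E$ by the position of the next $1$ (with $E$ the set on which no further $1$ ever occurs), I would note that $E\cup\{\bar 0\}\subset\sigma^{-1}\{\bar 0\}$ is a disjoint union, forcing $\nu(E)=0$ by invariance. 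Thus $\nu([10^{g-1}1])>0$ for some integer $g\ge 2$.

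Next comes the disjointness step. By weak-star compactness, choose an increasing sequence $(M_k)$ along which $y$ quasi-generates $\nu$ and, passing to a subsequence, along which the pair $(a_n,y_n)_{n\in\N}$ quasi-generates a measure $\xi$ on $(\N\times\{0,1\})^\N$. Since $a$ is generic for $\lambda$, $\xi$ is automatically a joining of $\lambda$ and $\nu$. Determinism of $S$ yields $h(\nu)=0$, and the Gauss system has CPE, so Furstenberg's theorem forces $\xi=\lambda\times\nu$.

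The final step converts joint empirical data into the target frequency and compares it with $\lambda([1,1])$. A consecutive pair in $x|_S$ at index $i$ corresponds to positions $s_i$ and $s_{i+1}=s_i+g$ in $\N$, so, with $N_k:=\#(S\cap[1,M_k])$, I would write
\[
\#\{i\le N_k:a_{s_i}=a_{s_{i+1}}=1\}=\sum_{g\ge 1}\#\big\{n\le M_k: y|_{[n,n+g]}=10^{g-1}1,\ a_n=a_{n+g}=1\big\}+O(1).
\]
For each fixed $g$, quasi-generation of $\xi=\lambda\times\nu$ makes the summand divided by $M_k$ converge to $\lambda([1,*^{g-1},1])\,\nu([10^{g-1}1])$; the family $\nu([10^{g-1}1])$ being summable, a truncation argument then swaps $\sum_g$ with $\lim_k$. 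Since $N_k/M_k\to\nu([1])>0$, the $(1,1)$-frequency of $x|_S$ along $(N_k)$ converges to
\[
\frac{1}{\nu([1])}\sum_{g\ge 1}\lambda([1,*^{g-1},1])\,\nu([10^{g-1}1]).
\]
Because $\lambda([1,*^{g-1},1])<\lambda([1,1])$ for all $g\ge 2$ by \cite[Lemma~3.1]{HV}, and at least one such $g$ carries positive $\nu$-mass, the displayed limit is strictly less than $\lambda([1,1])$, so $x|_S$ is not CF-normal. The main obstacle I anticipate is the bookkeeping in this last step: verifying that the $S$-indexed count matches the gap-indexed count on $\N$ up to $o(N_k)$, and justifying the swap of the $g$-sum with the subsequential limit. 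Disjointness itself is immediate from the CPE property of the Gauss system.
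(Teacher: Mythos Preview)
Your proposal is correct and follows essentially the same route as the paper: form the double sequence $\binom{x}{y}$, use CPE of the Gauss system plus determinism of $S$ to force the quasi-generated joining to be $\lambda\times\nu$, express the frequency of $(1,1)$ in $x|_S$ as a convex combination $\sum_{g\ge1}\lambda([1,*^{g-1},1])\,c_g$ with $c_g=\nu([10^{g-1}1])/\nu([1])$, and apply \cite[Lemma~3.1]{HV}. The paper argues $c_1<1$ by contradiction (if $c_1=1$ then $\nu\in\mathsf{conv}\{\delta_{\bar0},\delta_{\bar1}\}$), whereas you directly exhibit some $g\ge2$ with $c_g>0$ via the decomposition of $[10]$ and the observation $\nu(\{1\bar0\})=0$; these are equivalent, and the truncation you flag as a potential obstacle is handled in the paper exactly as you suggest, by cutting the $g$-sum at $n_\varepsilon=\lfloor1/\varepsilon\rfloor$ and noting the tail contributes at most~$\varepsilon$.
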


\begin{proof}
Let $S$ be a non-superficial deterministic set. By assumption, there exists an increasing \sq\ $(N_k)_{k\ge1}$ along which $y=\mathbbm1_S$ quasi-generates a measure $\nu\notin\mathsf{conv}\{\delta_{\bar 0},\delta_{\bar 1}\}$ (in particular, \mbox{$\nu([1])>0$}). 

Let $x=[0;a_1,a_2,\dots]$ be CF-normal. By convention, $x$ will also denote the sequence of continued fraction digits: $x=(a_1,a_2,\dots) \in \N^\N$. Consider the ``double sequence'' 
\[
\binom x{y} = \begin{pmatrix}a_1, & a_2, & a_3, & \dots \\ \1_S(1), & \1_S(2), & \1_S(3),& \dots \end{pmatrix},
\] 
which we view as one sequence whose entries are two-element columns.

There exists an increasing subsequence $(N_{k_i})_{i\ge1}$ of $(N_k)_{k\ge1}$ along which the double sequence $\binom x{y}$ quasi-generates a shift-invariant measure $\xi$ on the space of double sequences, $\N^\N\times\{0,1\}^\N$. Note that $\xi$ is a joining of the Gauss measure $\lambda$ with $\nu$. As we know, $\lambda$ has CPE, while (since $S$ is deterministic) $\nu$ is a measure of entropy zero. Therefore $\lambda$ and $\nu$ are disjoint, which implies that $\xi$ is exactly the product measure $\lambda\times\nu$. 

Observe that the block $(1,1)$ occurs in $x|_S$ if and only if a double block of the form 
\[
B_n=\begin{pmatrix} 1, & *^n, & 1 \\ 1, & 0^n, & 1 \end{pmatrix}
\]
occurs in $\binom x{y}$ for some $n\ge 0$.
Pick a large integer $N$, and let $C_N$ denote the number of occurrences of $(1,1)$ in $x|_S$ up to the position $N$ in $x$. Let $D_N=\#(S\cap\{1,2,\dots,N\})$ be the number of occurrences of $1$ in $y$ up to the position $N$. If $x|_S$ were CF-normal, we would have
\[
\lim_{N\to \infty}\frac{C_N}{D_N}= \lambda([1,1]).
\]
However, we will show that the lower limit on the left is strictly smaller.

Denote by $C_{n,N}$ the number of occurrences of the double block $B_n$ in $\binom x{y}$ up to position~$N$, and for $\eps > 0$ denote $n_\eps = \lfloor 1/\eps \rfloor$. By construction, we have the following equality:
\[
C_N=\sum_{n=0}^\infty C_{n,N} = \sum_{n=0}^{n_\eps}C_{n,N} + \sum_{n>n_\eps}C_{n,N}.
\]
As a consequence,
\[
\liminf_{N\to\infty} \frac{C_N}{D_N}\le\Bigg(\lim_{k\to\infty}\frac {N_k}{D_{N_k}}\Bigg)\Bigg(\lim_{i\to\infty} \sum_{n=0}^{n_\eps} \frac{C_{n,N_{k_i}}}{N_{k_i}}+\limsup_{N\to\infty} \sum_{n>n_\eps} \frac{C_{n,N}}N\Bigg).
\]
The ratios $\frac {N_k}{D_{N_k}}$ converge to $\frac1{\nu([1])}$ (recall that $\nu([1])>0$), while for each $n\ge 0$ the ratios $\frac{C_{n,N}}N$ converge along $(N_{k_i})_{i\ge1}$ to~$\xi([B_n])$. The blocks $B_n$ with $n>n_\eps$ are longer than $\frac1\eps$, hence $\sum_{n>n_\eps} \frac{C_{n,N}}N \le\eps$ regardless of~$N$, which implies that $\limsup_{N\to\infty} \sum_{n>n_\eps} \frac{C_{n,N}}N \le \eps$. 

Since $\xi=\lambda\times\nu$, we have $\xi([B_n])=\lambda([1,*^n,1])\cdot\nu([1,0^n,1])$. Putting these facts together and letting $\eps\to0$, we conclude that
\[
\liminf_{N\to\infty} \frac{C_N}{D_N}\le\frac1{\nu([1])}\sum_{n=0}^\infty \lambda([1,*^n,1])\cdot\nu([1,0^n,1]) = 
\sum_{n=0}^\infty \lambda([1,*^n,1])\cdot c_n,
\]
where $c_n=\tfrac{\nu([1,0^n,1])}{\nu([1])}$ can be interpreted as the conditional measure $\nu([1,0^n,1]\:|\:[1])$, i.e., the conditional probability that if the first symbol (in the second row) is $1$ then it is followed by exactly $n$ symbols $0$ and then a $1$. Because $\nu([1])>0$, the event that the first symbol $1$ is followed by just $0$'s has (conditional) probability zero, implying that $\sum_{n=0}^\infty c_n = 1$. So, the right hand side is a convex combination of the numbers $\lambda([1,*^n,1])$ in which only the term with $n=0$ equals $\lambda([1,1])$ and, by \cite[Lemma~3.1]{HV}, all other are all strictly smaller. If $c_0$ were equal to $1$ then $\nu$-almost every element starting with $1$ would have $1$ at the second coordinate. In terms of the occurrences of $1$'s in $y$, this would mean that in a long block of the form $y|_{[1,N_{k_i}]}$, the majority of digits $1$ are followed by the digit $1$. This is possible only when $y|_{[1,N_{k_i}]}$ is built of long blocks of $1$'s and long blocks of $0$'s (and a negligible part of undetermined structure), implying that $\nu\in\mathsf{conv}\{\delta_{\bar 0},\delta_{\bar 1}\}$, a case which we have eliminated. So, $c_0<1$ and hence $\sum_{n=1}^\infty c_n>0$. Any convex combination of terms smaller than or equal to $\lambda([1,1])$, with nonzero contribution of terms strictly smaller, is strictly smaller than $\lambda([1,1])$. This ends the proof.
\end{proof}

\section{Systems with CPE that are not Bernoulli shifts}\label{sec CPE not B}

The proofs of \Cref{main1} and \Cref{main2} are similar to that of Theorem \ref{distcf}. The only difference is that in the general case of system with CPE that is not a Bernoulli shift, instead of the block $(1,1)$ we will use a possibly different block $B$ with the property that by inserting stars inside $B$, no matter how many and no matter where, we lower the measure of the corresponding cylinder.
In order to find such a block, we invoke the property that systems with CPE are mixing of all orders (see~\cite{R61}, or see~\cite[p.\,52]{Parry} for a textbook reference).

\smallskip
Before we proceed, we establish some terminology. Let $\Lambda$ be a finite or countable alphabet and choose a natural number $k\ge1$. Consider a block $B=(b_1,b_2,\dots,b_k)\in\Lambda^k$ and a $(k\!-\!1)$-dimensional vector of nonnegative integers $\pp=(p_1,p_2,\dots,p_{k-1})\in\N_0^{k-1}$. We denote 
\[
B^\pp=(b_1,*^{p_1},b_2,*^{p_2},\dots,b_{k-1},*^{p_{k-1}},b_k).
\]
Intuitively, this is $B$ ``spread apart'' by inserting stars according to $\pp$. The cylinder set $[B^\pp]$ associated with such a block is determined by fixing the following symbols:
$b_1$ at the coordinate~1, $b_2$ at the coordinate $1+p_1$, $b_3$ at the coordinate $2+p_1+p_2$, etc., until $b_k$ at the coordinate $k-1+p_1+p_2+\cdots+p_{k-1}$, and letting all other symbols be arbitrary. 

On $\N_0^{k-1}$ we introduce a partial order $\prec$ by the following rule: for $\pp=(p_1,p_2,\dots,p_{k-1})$ and $\qq=(q_1,q_2,\dots,q_{k-1})$ we write $\pp\prec\qq$ if for each $i=1,2,\dots,k-1$ we have $p_i\le q_i$ and for at least one $i$ we have strict inequality $p_i < q_i$.

\begin{lem} \label{lem spread block}
Let $\mu$ be a shift-\im\ on a symbolic space $\Lambda^\N$ ($\Lambda$ finite or countable) such that the measure-preserving system $(X,\mu,\sigma)$ is mixing of all orders but not a Bernoulli shift. Then there exists $k\ge2$, a block $B\in\Lambda^k$, and a vector $\pp_0 \in \N_0^{k-1}$ such that for all vectors $\qq\succ\pp_0$ we have $\mu([B^\qq]) < \mu([B^{\pp_0}])$.
\end{lem}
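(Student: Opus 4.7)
My plan is to pick the block $B$ at a carefully chosen ``minimality'' so that spreading in any single direction already drives $f(\pp) := \mu([B^\pp])$ down to the product $P := \prod_j \mu([b_j])$; then to locate a maximizer of $f$ that is also $\prec$-maximal among all maximizers.

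Let $k \geq 2$ be the smallest integer for which there exist $B' \in \Lambda^k$ and $\pp' \in \N_0^{k-1}$ with $\mu([B'^{\pp'}]) \neq \prod_j \mu([b'_j])$; since $\mu$ is not Bernoulli, $k$ is finite. The minimality immediately yields the structural fact that for every $\ell < k$, every $C \in \Lambda^\ell$, and every $\qq \in \N_0^{\ell-1}$, one has $\mu([C^{\qq}]) = \prod_j \mu([c_j])$. A standard summing argument over the last symbol then produces $B \in \Lambda^k$ and $\pp^* \in \N_0^{k-1}$ with $f(\pp^*) > P$. Combined with mixing of order $2$ applied to the decomposition $[B^\pp] = [B_L^{\pp_L}] \cap \sigma^{-n}[B_R^{\pp_R}]$ splitting at the $i$-th gap (where $B_L, B_R$ are the left and right parts of $B$ and $n$ is the starting position of $B_R$ minus one), the structural fact gives the crucial \emph{single-coordinate limit}
\[ \lim_{p_i \to \infty} f(\pp) = P \qquad \text{for every } i \text{ and every fixed } (p_j)_{j \neq i}, \]
since $B_L$ and $B_R$ both have length strictly less than $k$, so their cylinders have product measure by minimality.

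Two standard arguments then finish. (a) The supremum $F := \sup f$ is attained: extract a subsequence of a maximizing sequence along which each coordinate is either eventually constant or tends to infinity; if any coordinate tends to infinity, mixing of appropriate order together with the structural fact applied to each resulting piece forces $f \to P$, contradicting $F \geq f(\pp^*) > P$. Hence every maximizing sequence is bounded and $F$ is attained. (b) The set $M := \{\pp : f(\pp) = F\}$ has a $\prec$-maximal element $\pp_0$: otherwise an infinite ascending chain in $M$ would exist, some coordinate would tend to infinity along it, and the same argument would force $f \to P$ along the chain, contradicting $f \equiv F > P$ on $M$. For any $\qq \succ \pp_0$, maximality of $\pp_0$ in $M$ gives $\qq \notin M$, so $f(\qq) < F = f(\pp_0)$.

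The main obstacle is exactly the choice of $k$. The naive choice --- the smallest $k$ such that some consecutive $k$-block (i.e., with $\pp' = \vec 0$) has non-product measure --- is not strong enough, because the pieces $B_L^{\pp_L}, B_R^{\pp_R}$ appearing after a split can themselves carry non-Bernoulli measure at large spreads, and the single-coordinate limit may then fail. Taking the minimum over all $(B', \pp')$ is precisely what forces every splitting piece to be automatically Bernoulli, so that a single application of mixing of order $2$ delivers the limit driving the whole argument.
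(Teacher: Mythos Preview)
Your argument is correct and follows essentially the same route as the paper: choose the minimal $k$ for which some spread block fails to have product measure, pass to a block $B$ with $f(\pp^*)>\Pi(B)$, and use mixing of all orders together with the minimality of $k$ to show that a suitable set of ``large'' values of $f$ has a $\prec$-maximal element. The only cosmetic difference is that you work with the set $M$ of global maximizers of $f$ (proving first that the supremum is attained), whereas the paper works directly with the level set $Q=\{\qq:f(\qq)\ge\Pi(B)+\varepsilon\}$ for the specific $\varepsilon=f(\pp^*)-\Pi(B)$; your separate ``single-coordinate limit'' via mixing of order~2 is correct but not actually used, since steps (a) and (b) already invoke mixing of the appropriate higher order.
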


\begin{proof} We say that $\mu$ is ``$k$-Bernoulli'' if for any block $B\in\Lambda^k$ and any $(k\!-\!1)$-dimensional vector~$\pp$ one has
\[
\mu([B^\pp])=\Pi(B):=\mu([b_1])\mu([b_2])\cdots\mu([b_k]).
\]
Since the system is not a Bernoulli shift, there exists a natural $k$ such that the system is not $k$-Bernoulli. Let $k$ be the minimal such integer. Clearly, $k\ge 2$. There exists a block $B\in\Lambda^k$ and a $(k\!-\!1)$-dimensional vector $\pp\in\N_0^{k-1}$ such that $\mu([B^\pp])\neq\Pi(B)$. Either $\mu([B^\pp])>\Pi(B)$ or 
$\mu([B^\pp])<\Pi(B)$, and in the latter case some other block $C\in\Lambda^k$ satisfies $\mu([C^\pp])>\Pi(C)$. Then we rename $C$ as $B$ and we have $\mu([B^\pp])>\Pi(B)$ anyway. 

Denote $\varepsilon = \mu([B^\pp])-\Pi(B)>0$, consider the function $f: \N_0^{k-1} \to \mathbb R$ given by
\[
f(\qq)=\mu([B^\qq]),
\]
and let $Q$ be the set of vectors $\qq$ such that $f(\qq)\ge\Pi(B)+\varepsilon$. We know the set $Q$ is non-empty because it contains the vector $\pp$. In the following two paragraphs we will show (using mixing of all orders) that $Q$ is finite.

Suppose that $Q$ is infinite. Then $Q$ contains a sequence $\{\qq^{(n)}:n\ge 1\}$ strictly increasing with respect to the order $\prec$. There exists a non-empty subset of indices $I\subset\{1,2,\dots,k-1\}$ such that for every $i\in I$ the $i^\text{th}$ terms $q^{(n)}_i$ of $\qq^{(n)}$ grow to infinity with increasing $n$, while for $i\notin I$ the terms $q^{(n)}_i$ are eventually constant. By skipping finitely many initial terms, we may assume that they are constant throughout the sequence. 

For a given $n$, the blocks of stars $*^{q_i^{(n)}}$ with $i\in I$ separate in $B^{\pp^{(n)}}$ blocks of the form \smash{$B_j^{\bar r_j}$}, where $j=1,2,\dots,|I|+1$, and $B_j\in\Lambda^{k_j}$, and $\bar r_j\in \N_0^{k_j-1}$ for some $1\le k_j<k$ (if $k_j=1$ then $B_j^{\bar r_j}$ is just a single symbol from $\Lambda$). Note that for each $j$ both $B_j$ and $\bar r_j$ do not depend on $n$. On the other hand, the blocks $B_j^{\bar r_j}$ are separated by distances that increase with $n$. This fact, together with the mixing of all orders property of our system, imply that $f(\qq^{(n)})=\mu([B^{\qq^{(n)}}])$ tends to the product
\[
\prod_{j=1}^{|I|+1}\mu([B_j^{\bar r_j}]).
\]
Because, for each $j=1,2,\dots,|I|+1$, we have $k_j<k$, our system is $k_j$-Bernoulli, which implies that $\mu([B_j^{\bar r_j}])=\Pi(B_j)$. Eventually, we have shown that $f(\qq^{(n)})$ tends to $\prod_{j=1}^{|I|+1}\Pi(B_j)=\Pi(B)$. This is a contradiction with the definition of the set $Q$.

Having proved that $Q$ is finite, we know that there exists a (not necessarily unique)~$\prec$-maximal element in $Q$, which we denote by $\pp_0$. By the definition of $Q$, $f(\pp_0)\ge\Pi(B)+\varepsilon$, while for any $\qq\succ\pp_0$ we have $\qq\notin Q$, implying the desired inequality $f(\qq)<\Pi(B)+\varepsilon\le f(\pp_0)$.
\end{proof}

\begin{proof}[Proof of \Cref{main1}.]
Let $x=(x_1,x_2,\dots)\in\Lambda^\N$ be $\mu$-normal, and let $S=\{s_1,s_2,\dots\}$ be a non-superficial deterministic set. We need to show that $x|_S$ is not $\mu$-normal.

Let $y=\1_S$ and consider the double sequence
\[
\binom x{y} = \begin{pmatrix}x_1, & x_2, & x_3, & \dots \\ \1_S(1), & \1_S(2), & \1_S(3),& \dots \end{pmatrix}.
\] 
As in the proof of \Cref{distcf}, this double \sq\ quasi-generates a measure $\xi$ that is a joining of $\mu$ and some measure $\nu$ derived from $S$, $\nu\notin\mathsf{conv}\{\delta_{\bar 0},\delta_{\bar 1}\}$. Since our system has CPE and $S$ is deterministic, we have $\xi=\mu\times\nu$. 

As we assume that $\mu$ is not a Bernoulli shift, we can use \Cref{lem spread block}.
Let $B=(b_1,b_2,\dots,b_k)\in\Lambda^k$ and $\pp_0=(p_1,p_2,\dots,p_{k-1})\in\N_0^{k-1}$ be respectively the block and vector from that lemma. Set $n_1=1$ and for $i=2,3,\dots,k$ set
\[
n_i=i+\sum_{j=1}^{i-1}p_j
\]
(for each $i=1,2,\dots,k$, the number $n_i$ is the position of $b_i$ in $B^{\pp_0}$; in particular, $n_k$ is the length of $B^{\pp_0}$).
Observe that $B^{\pp_0}$ occurs in $x|_S$ if and only if a block of the form $\binom{ B^{\qq} }{ C }$ appears in $\binom x{y}$, where $C$ is any 0-1-block which starts and ends with a $1$ and has in total $n_k$ symbols $1$,  
while $\qq=(q_1,q_2,\dots,q_{k-1})\succcurlyeq\pp$ is uniquely determined by $C$ according to the following rule: 
\[
q_i=m_{i+1}-m_i-1 \qquad (i=1,2,\dots,k-1),
\]
where $m_i$ is the position of the ${n_i}^\text{th}$ symbol $1$ in $C$. 
To illustrate this dependence, suppose that $B=(b_1,b_2,b_3)$ and $\pp_0=(1,2)$. Then $B^{\pp_0}=(b_1,*,b_2,*,*,b_3)$ and $n_1=1$, $n_2=3$, $n_3=6$. As an example of a block
that starts and ends with $1$ and has $n_3=6$ symbols~$1$ we take $C=(1,0,0,1,1,0,1,1,0,1)$. The positions of the first, third and sixth symbol $1$ in $C$ are $m_1=1$, $m_2=5$, and $m_3=10$, respectively, which yields
$q_1=3$ and $q_2=4$. The resulting double block for this example is thus
\[
\binom{ B^{\qq} }{ C } = \begin{pmatrix}b_1,\!\!\!&*,\!\!\!&*,\!\!\!&*,\!\!\!&b_2,\!\!\!&*,\!\!\!&*,\!\!\!&*,\!\!\!&*,\!\!\!&b_3\\
1,\!\!\!&0,\!\!\!&0,\!\!\!&1,\!\!\!&1,\!\!\!&0,\!\!\!&1,\!\!\!&1,\!\!\!&0,\!\!\!&1 \end{pmatrix}.
\] 
The top row restricted to the positions of $1$'s in the bottom row matches $B^{\pp_0}$.

\smallskip
Although the assignment $C\mapsto\qq$ is not injective, it is surjective onto the set $\{\qq:\qq\succcurlyeq\pp_0\}$, and the vector $\qq=\pp_0$ is associated uniquely to the block $C_0$ consisting of $n_k$ consecutive symbols~$1$ (and no symbols $0$).

Exactly as in the proof of \Cref{distcf}, it can be shown that the lower frequency of $B^{\pp_0}$ in $x|_S$ does not exceed
\[
\sum_C \mu([B^{\bar q}])\nu([C]|[1]),
\]
where $C$ ranges over all 0-1-blocks which start and end with a~1 and have $n_k$ symbols~1 in total, while, in every summand, $\qq\succcurlyeq\pp_0$ denotes the vector associated to $C$ in the manner described above. The conditioning is with respect to the event that the first digit in the second row is 1. Because $\nu([1])>0$, the conditional probability (given that the first symbol in the second row is~1) of the event that there are jointly only finitely many symbols~1 in the second row is zero. Thus, with conditional probability one, to the right of the initial symbol~1 there appear infinitely many~1's, and hence there exists a block $C$ that starts at the initial~1, has $n_k$ symbols $1$, and ends with~1. This implies that the coefficients $\nu([C]|[1])$ sum up to~one. In this manner, we have estimated the lower frequency of $B^{\pp_0}$ in $x|_S$ by a convex combination of the numbers $\mu([B^\qq])$ where, by the preceding lemma, all of these numbers are strictly less than $\mu([B^{\pp_0}])$ except when $\qq=\pp_0$. This vector corresponds uniquely to the block $C_0$ consisting of just the symbols~1. If $\nu([C_0]|[1])=1$ then $\nu$-almost every element starting with~1 starts with $n_k$ consecutive symbols~$1$, in particular, the first digit~1 is followed by another digit $1$. In the preceding proof we have already shown that this is possible only when $\nu\in\mathsf{conv}\{\delta_{\bar 0},\delta_{\bar 1}\}$, an option which we have excluded. So $\nu([C_0]|[1])<1$ and the considered convex combination (and hence the lower frequency of $B^{\pp_0}$ in $x|_S$) is strictly less than $\mu([B^{\pp_0}])$. We have shown that $x|_S$ is not $\mu$-normal, as desired.  
\end{proof}

\begin{rem}
We can now see that the property of Bernoulli shifts which ``helps'' in $\mu_P$-normality preservation of deterministic sets is this:
for all $k\in\N$, all $B\in\Lambda^k$, and all vectors $\pp\in \N_0^{k-1}$, we have
\[
\mu_P([B])=\mu_P([B^{\pp}]).
\]
This property is called \emph{spreadability}. \Cref{lem spread block} shows that systems with CPE that are not Bernoulli shifts not only fail spreadability but fail it in a special way. Currently, there exist several proofs that the only spreadable systems are mixtures of Bernoulli shifts (i.e., systems in which almost all ergodic components are Bernoulli shifts); according to our search, the first such proof is due to Ryll-Nardzewski~\cite{Ry}. Nonetheless, spreadability is not necessary for $\mu$-normality preservation of selected deterministic sets. Clearly, any example of this kind must not involve a system with CPE. Two such examples are \Cref{garcia1,garcia2} from \Cref{sec examples}. We also give examples where both separability and disjointness are violated.
\end{rem}

To conclude our discussion of systems with CPE, we show that if $\mu$ is a measure with CPE then non-deterministic sets do not preserve $\mu$-normality. 

\begin{proof}[Proof of \Cref{main2}.] 
If $\mu$ is a Bernoulli measure $\mu_P$, then we can use the result \cite[Theorem~3]{Ka} of Kamae to see that non-deterministic sets do not preserve simple $\mu_P$-normality (let alone $\mu_P$-normality). So, our only concern are systems with CPE that are not Bernoulli shifts.

Let $x = (x_1,x_2,...) \in \Lambda^\N$ be $\mu$-normal for a non-Bernoulli CPE measure $\mu$. Let $S=\{s_1,s_2,\dots\}$ be a non-deterministic set. There exists a sequence $(N_k)_{k\ge1}$ along which $y=\mathbbm 1_S$ quasi-generates a measure $\nu$ of positive entropy. Note that this implies $\nu([1])\in(0,1)$. Since $\mu$ has CPE and hence is ergodic, it follows from \cite[Theorem 2]{Ka} that there exists a $\mu$-normal element $x\in\Lambda^\N$ such that the double \sq\ 
\[
\binom x{y} = \begin{pmatrix}x_1, & x_2, & x_3, & \dots \\ \1_S(1), & \1_S(2), & \1_S(3),& \dots \end{pmatrix},
\] 
quasi-generates (along $N_k$) the product joining $\mu\times\nu$.

From here the proof is identical as in the case of a non-superficial deterministic set $S$. We use the block $B^{\pp_0}$ from \Cref{lem spread block} and we estimate the lower frequency of $B^{\pp_0}$ in $x|_S$ by a convex combination of the numbers $\mu([B^\qq])$ all of which but one being strictly less than $\mu([B^{\pp_0}])$. The exception occurs when $\qq=\pp_0$, but the coefficient by which the corresponding term is multiplied is strictly smaller than one. This implies that the lower frequency of $B^{\pp_0}$ in $x|_S$ is less than $\mu([B^{\pp_0}])$ implying that $x|_S$ is not $\mu$-normal. This suffices to conclude that $S$ does not preserve $\mu$-normality.
\end{proof}

\pagebreak[3]
\section{Preservation of simple normality} \label{sec simple normality}

Recall that a set $S\subset\N$ ``preserves simple $\mu$-normality'' if for every $\mu$-normal element $x$ the element $x|_S$ is simply $\mu$-normal. The property of being simple $\mu$-normality preserving is clearly weaker than being (``full'') $\mu$-normality preserving. It is even possible that a set $S$ which preserves simple $\mu$-normality destroys $\mu$-normality. Indeed, our \Cref{main3} implies that deterministic sets of positive lower density preserve simple $\mu$-normality for measures $\mu$ with CPE, while, as soon as $\mu$ is not Bernoulli and $S$ is non-superficial, \Cref{main1} implies that $S$ destroys $\mu$-normality.
 
\begin{proof}[Proof of \Cref{main3}] The idea behind this proof is similar to that of~\cite[Theorem~5.1 (1)\,$\Rightarrow$\,(2)]{BDV}. Suppose that a set $S=\{s_1,s_2,\dots\}$ of positive lower density does not preserve simple $\mu$-normality. Then there exists a $\mu$-normal element $x\in\Lambda^\N$ such that for some symbol $b\in\Lambda$~the limit
\[
\lim_{N\to\infty}\frac{\#\{i\in\N: s_i\le N\text{ and }x_{s_i}=b\}}{\#\{i\in\N: s_i\le N\}}
\]
either does not exist or is different from $\mu([b])$. In either case there exists an increasing \sq\ $(N_k)_{k\ge1}$ along which the above limit exists and is different from $\mu([b])$. We can also assume that along the same \sq\ the double \sq\ $\binom{x}{y}$ (where, as usually, $y=\1_S$) quasi-generates a joining $\xi$ of $\mu$ with some shift \im\ $\nu$ derived from $S$. Since $S$ has positive lower density, we have $\nu([1])>0$. We can thus write
\[
\mu([b])\neq \lim_{k\to\infty}\frac{\#\{i\in\N: s_i\le N_k\text{ and }x_{s_i}=b\}}{N_k}\frac{N_k}{\#\{i\in\N: s_i\le N_k\}}=\frac{\xi\big(\doublecyl a1\big)}{\nu([1])}.
\]
We have shown that $\xi\big(\doublecyl b1\big)\neq\mu([b])\nu([1])$. Therefore $\xi$ is not the product joining of $\mu$ and~$\nu$, and hence $\mu$ is not disjoint from the measure $\nu$ derived from $S$. The contrapositive of \Cref{main3} (and therefore \Cref{main3} itself) is thus proved.
\end{proof}

The converse to \Cref{main3} need not hold; below we give three examples (\ref{morse1}, \ref{morse2}, and~\ref{trivex1}) of this phenomenon, in which $\mu$-normality preservation (and hence simple $\mu$-normality preservation), holds without disjointness. It remains an open problem whether the lack of disjointness resulting from $\mu$ and $\nu$ both having positive entropy is strong enough to exclude simple $\mu$-normality preservation. See \Cref{questions} for more details concerning this problem.

\section{Examples} \label{sec examples}
The results proved above may suggest that the phenomenon of $\mu$-normality preservation exists exclusively for Bernoulli shifts. Notice, however, that we claim so only within the class of systems with CPE. Beyond this class the phenomenon may still occur and with various configurations of determinism of $S$, entropy of $\mu$, and disjointness. To illustrate this, we now provide six examples, summarized in \Cref{fig examples}. In all these examples the sets $S$ are non-superficial.

\begin{figure}[htb]
    \begin{tabular}{|lll|l|} \cline{1-3}
    $S$ deterministic & entropy of main measure & disjointness \\ \hline
    yes & zero & yes & \Cref{garcia1} \\
    yes & zero & no & \Cref{morse1}  \\
    yes & positive & yes & \Cref{garcia2} \\
    yes & positive & no & \Cref{morse2}  \\
    no & zero &yes & \Cref{trivex} \\
    no & zero &no & \Cref{trivex1} \\
    no & positive & yes & impossible \\
    no & positive & no & unknown \\ \hline
    \end{tabular}
    \caption{Summary of examples of $\mu$-normality preservation.}
    \label{fig examples}
\end{figure}

Our first example consists of a shift system of entropy zero and an arithmetic progression (hence deterministic) $S$ whose derived measure is disjoint from $\mu$ and which preserves $\mu$-normality.
\begin{example}\label{garcia1}
Let $x\in\{0,1\}^\N$ be the ``Garcia--Hedlund sequence'' (first appearing in \cite{GH}), in which $x_n$ is $1$ if and only if the highest power of $2$ that divides $n$ is even. Alternatively, $x$ can be constructed from the following process~(see \cite[Lemma 2]{Aetal}).
\begin{enumerate}[\quad{Step }1:]
    \item put 1 at every odd position.
    \item put 0 at every position congruent to $2\!\!\mod 4$.
    \item put 1 at every position congruent to $4\!\!\mod 8$.
    \item put 0 at every position congruent to $8\!\!\mod 16$.
    \item[etc.] (use alternately 1's in odd steps and 0's in even steps.)
\end{enumerate}
This process defines $x$ at all coordinates. Let $S=3\N$. Explicitly,
\begin{align*}
          x&=101110101011101110111010101110101011101010111011101\dots,\\
y=\mathbbm1_S&=001001001001001001001001001001001001001001001001001\dots.
\end{align*}
The shift-orbit closure $X$ of $x$ is a so-called ``{regular Toeplitz system}'', which carries a unique \im~$\mu$ isomorphic to the unique \im\ of the dyadic odometer. So, we have constructed a measure-preserving shift system $(X,\mu,\sigma)$. As a consequence of unique ergodicity, $X$ satisfies \emph{uniform $\mu$-normality}: for any $k\in\N$ and any $\varepsilon>0$ there exists $n_0\in\N$ such that for any $n\ge n_0$, any $\ell\in \N$, any block $B\in\Lambda^k$, and any $y\in X$, we have
\[
\big|\mathsf{Fr}_{(y_\ell,y_{\ell+1},...,y_{\ell+n})}(B) - \mu([B])\big| < \varepsilon.
\]

\medskip
We claim that $S$ preserves $\mu$-normality. Observe that $y$ is generic for the periodic measure $\nu$ supported by a cycle of three points, with atoms of masses $\frac13$ each. It is well known that the dyadic odometer is disjoint from any periodic system with an odd period, so $\mu$ and $\nu$ are disjoint. 

First, we analyze $x|_S=(x_3,x_6,x_9,x_{12},x_{15},\dots)$. Note that all positions that are odd relatively in $x|_S$ come from odd positions in $x$, so $x|_S$ inherits at its odd places the $1$'s inserted in~$x$ in Step~1. Likewise, the positions that are congruent to $2\!\!\mod 4$ in $x|_S$ come from the positions congruent to $2\!\!\mod 4$ in~$x$. So $x|_S$ inherits at these places the $0$'s inserted in $x$ in Step~2. Arguing in this manner we come to the conclusion that $x|_S$ actually matches $x$. In this most obvious way,~$S$ preserves $\mu$-normality of~$x$.

Next, we observe $x|_{S+1}=(x_4,x_7,x_{10},x_{13},x_{16},\dots)$. This time every even position in $x|_{S+1}$ comes from an odd position in $x$ (so inherits a $1$ from the first step), every position congruent to $3\!\!\mod 4$ in $x|_{S+1}$ comes from a position congruent to $2\!\!\mod 4$ in $x$ (so inherits a $0$ from the second step), and so on. We deduce that $x|_{S+1}$ has the same structure as $x$ with the periodic parts filled in the Steps~1, 2, 3, etc.~shifted. It is an elementary property of regular Toeplitz systems that such an element belongs to the orbit closure of $X$ and hence is $\mu$-normal. Analogously, $x|_{S+2}$ belongs to $X$ and is $\mu$-normal. 

Finally, take any element $z\in\Lambda^\N$ that is $\mu$-normal (it need not even belong to $X$). Fix $k\in\N$ and $\varepsilon>0$ and let $n_0$ be as in the uniform $\mu$-normality condition. It follows from $\mu$-normality of $z$ that after ignoring a set of coordinates of density zero, $z$ consists of blocks $C$ longer than $3n_0$ appearing in~$x$. Thus, after ignoring a set of coordinates of density zero, $z|_S$ consists of blocks of the form $C|_S$ longer than $n_0$ that appear in either $x|_S$ or in $x|_{S+1}$ or in $x|_{S+2}$ (depending on the congruence modulo 3 of the starting position of $C$ in $y$). In any case, all these blocks appear in some elements of $X$. By uniform $\mu$-normality of $X$, $z|_S$ satisfies the condition
\[
|\mathsf{Fr}_{z|_S}(B) - \mu([B])|<\varepsilon
\]
for any block $B\in\Lambda^k$. Since this is true for any $k\in\N$ and any $\varepsilon>0$, we obtain that~$z|_S$ is~$\mu$-normal. Thus we have shown that~$S$ preserves $\mu$-normality.
\end{example}

In the next example $\mu$ has entropy zero, $S$ is non-superficial, deterministic, and has positive lower density, and its derived measure~$\nu$ is \emph{not} disjoint from $\mu$ (in fact, it is a factor of $\mu$), and yet $S$ preserves $\mu$-normality.

\begin{example}\label{morse1}
Let now $x\in\{0,1\}^\N$ denote the classical Thue--Morse \sq\footnote{Although the sequence appears earlier in works of Axel Thue, its popularity stems from a 1921 paper of Harold Marston Morse~\cite{Morse}.} (one definition is that $x_n = 1$ if and only if the number of $1$'s in the binary expansion of $n-1$ is odd), and let $S=2\N+1$ be the set of odd natural numbers. Explicitly,
\begin{align*}
x &= 01101001100101101001011001101001\dots,\\
y=\mathbbm1_S&=10101010101010101010101010101010\dots.
\end{align*}
Let $X$ denote the orbit closure of $x$.\footnote{It is well known (see, for example, \cite{Aetal}) that the map $\phi$ defined by $(\phi(z))_n = z_n+z_{n+1} \bmod 2$ maps the Morse system $X$ onto the Toeplitz system of the preceding example. In fact, $\phi$ is 2-to-1, sending both $x$ and its negation to the Garcia--Hedlund \sq.} The system $(X,\sigma)$ is known to be minimal and uniquely ergodic. We let $\mu$ denote the unique shift-\im\ on $X$. The unique measure $\nu$ derived from $S$ is supported by a cycle of two points. Since $X$ factors onto the dyadic odometer which, in turn, factors onto the two-cycle, $\nu$ is a factor of $\mu$. Nevertheless, it is almost immediate to see that $x|_S$ actually matches $x$, while $x|_{S+1}$ matches the negation of $x$ (which belongs to $X$ as well). So, both $x|_S$ and $x|_{S+1}$ are $\mu$-normal. From here we argue in the same way as in \Cref{garcia1} and deduce that $S$ preserves $\mu$-normality.
\end{example}

The following is a modification of \Cref{garcia1} so that the measure $\mu'$ has positive entropy.
\begin{example}\label{garcia2}
Let $\mu'=\mu\times\mu_P$ be the product of the measure $\mu$ from \Cref{garcia1} with the uniform Bernoulli measure on two symbols. Clearly, $\mu'$ has positive entropy without being a Bernoulli shift (and it does not have CPE, which makes this example possible). Let $S=3\N$, as in \Cref{garcia1}. The measure $\nu$ derived from $S$ is disjoint from $\mu'$ because it is disjoint from both $\mu$ and $\mu_P$. We claim that $S$ preserves $\mu'$-normality. Since $\mu$ and $\mu_P$ are disjoint, a double \sq\footnote{This is not to be confused with the double sequence~$\binom x{y}$ considered earlier. Here, the double sequence results from the measure $\mu'$ being $\mu \times \mu_P$, so the top row relates to the Toeplitz system and the bottom row relates to the Bernoulli shift.}~$\binom zw$ is $\mu'$-normal if and only if $z$ is $\mu$-normal and $w$ is $\mu_P$-normal (i.e., classically normal). Then
the sub\sq\
\[
\left.\binom zw\right|_S 
\]
has in the first row the \sq\ $z|_S$, which is $\mu$-normal by \Cref{garcia1}, and in the second row the \sq\ $w|_S$, which is classically normal by Wall's Theorem. So, $\left.\binom zw\right|_S$ is $\mu'$-normal, as claimed. 
\end{example}

The next example is similar to \Cref{morse1}, except that now our main measure has positive entropy. 
\begin{example}\label{morse2}
Let $\mu'=\mu\times\mu_P$ be the product of the Thue--Morse measure $\mu$ with the uniform Bernoulli measure $\mu_P$ on two symbols. Clearly, $\mu'$ has positive entropy.  Let $S$ be the set of odd numbers once again.  Because, as explained in \Cref{morse1}, $\mu$ is not disjoint from $\nu$, also $\mu'$ is not disjoint from $\nu$. However, by \Cref{morse1}, $S$ preserves $\mu$-normality. From here, the argument showing that $S$ preserves $\mu'$-normality is identical to the argument in \Cref{garcia2}.
\end{example}

The final two examples concern non-deterministic sets that preserve normality with respect to zero-entropy measures.

\begin{example}\label{trivex}
Let $\mu$ be the measure supported by the orbit closure of the 2-periodic \sq\ $x=010101\dots$. Let $s_0=0$ and let $S=\{s_n:n\ge 1\}$ be defined inductively by $s_n=s_{n-1}+\omega_n$, where $(\omega_n)\in\{1,3\}^\N$ is a fixed \sq\ that is generic for the uniform Bernoulli measure on two symbols. In other words, the distances between consecutive elements of $S$ are either $1$ or $3$ both appearing ``at random''. 
For example, $S$ might correspond to the \sq
\[
y=\mathbbm1_S=11001001111001001001100100111001001001001100111111001001\dots.
\]
It is elementary to see that $S$ has density $\frac12$ and generates a measure $\nu$ isomorphic to the mixing Markov process on three symbols given by the transition matrix
\[ \begin{bmatrix}
    \frac12 & \frac12 & 0\\
    0 & 0 & 1\\
    1 & 0 & 0
\end{bmatrix}. \]
Every mixing Markov process is isomorphic to a Bernoulli process, so it has CPE and hence $\nu$ and $\mu$ are disjoint.
Every $\mu$-normal element $z$ consists, up to density zero, of very long blocks appearing in $x$, i.e., blocks of the form $0101\dots01$. The set $S$ consists of alternating odd and even numbers in bounded distances, which implies that $z|_S$ consists (up to density zero) of long blocks of the form $0101\dots01$ as well. Thus $z|_S$ is $\mu$-normal and hence $S$ preserves $\mu$-normality.
\end{example}

\begin{example}\label{trivex1}
Let $\mu$ be generated by the $4$-periodic \sq\ $x=00110011001100110011\dots$. Define $S=\{s_1,s_2,\dots\}$ by letting $s_1=1$ and then, for $n\ge2$, $s_n = s_{n-1}+\omega_n$, where $\omega_{2k}=2$ for all $k\in\N$ while $\omega_{2k-1}=4$ or $8$ with probabilities~$\frac12$ each.
This time, $S$ generates a measure $\nu$ isomorphic to a Markov process (of positive entropy) that is not mixing and factors to a two-cycle (any element of the orbit closure of $y=\mathbbm1_S$ has either all symbols $1$ at even positions or all symbols $1$ at odd positions). Clearly, $\mu$ also factors to a two-cycle, so $\mu$ and $\nu$ are not disjoint. To see that $S$ preserves $\mu$-normality, observe that by moving $4$ or $8$ positions forward we never change the symbol in $x$, while by moving $2$ positions forward we always do. So, $x|_S=x$. Then we apply the already-familiar argument about any $\mu$-normal element being practically a concatenation of long blocks appearing in $x$ to see that $S$ preserves $\mu$-normality.
\end{example}

\section{Open questions}\label{questions}
These examples show that, unless $\mu$ has CPE, disjointness (of $\mu$ from the measures derived from $S$) is not necessary for $\mu$-normality preservation of $S$. In general, disjointness implies simple $\mu$-normality preservation, but not the other way around. Also, stretchability of $\mu$ is not necessary, regardless of whether the disjointness condition is satisfied or not. Moreover, any restrictions on the entropy of $\mu$ and determinism of $S$ seem to be irrelevant.

Hence the following questions are a mystery:
\begin{que}
Let $\mu$ be any ergodic shift-\im. What necessary condition must a set $S$ satisfy to be (simple) $\mu$-normality preserving? (We mean here nontrivial conditions other than those in \Cref{jeden}.)
\end{que}

\begin{que}
Is there a ``checkable'' sufficient condition for the pair (measure, set), $(\mu,S)$, which guarantees that $S$ preserves $\mu$-normality? 
\end{que}

A more particular question concerns the only configuration of parameters for which we have no suitable example:

\begin{que} \label{former claim}
Does there exist a measure $\mu$ with positive entropy and a non-deterministic set $S$ such that $S$ preserves simple $\mu$-normality? Is this possible for some $\mu$ with CPE? (By Kamae~\cite{Ka}, this is known to be impossible if $\mu$ is i.i.d.)
\end{que}

A negative answer to \Cref{former claim} would follow from a positive answer to the following question (that is, from a strengthening of \cite[Theorem~5.5]{BDV}):
\begin{que} \label{like5.5}
Suppose $(\Lambda_1^\N,\mu,\sigma)$ and $(\Lambda_2^\N,\nu,\sigma)$ are arbitrary shift systems on finite or countable alphabets, each with positive entropy. Must there exist a joining $\xi = \mu \vee \nu$ that makes the~``zero-coordinate partitions'' $\{[a]:a\in\Lambda_1\}$ and $\{[b]:b\in\Lambda_2\}$ dependent?
\end{que}
This last question is nontrivial already in the class of non-i.i.d.~Bernoulli systems.

\bigskip
\subsection*{Acknowledgements} The authors would like to thank Vitaly Bergelson for suggesting the topic. We also thank the referee of the first version of the paper for indicating a flaw which lead us to the discovery of the phenomena associated with superficial sets. The research of the second author is supported by NCN grant 2018/30/M/ST1/00061.

\providecommand\doi[1]{}

\end{document}